\theoremstyle{plain}
\newtheorem{thm}{Theorem}[section]
\newtheorem{lem}[thm]{Lemma}
\newtheorem{prop}[thm]{Proposition}
\newtheorem{cor}[thm]{Corollary}
\newtheorem{example}[thm]{Example}
\theoremstyle{definition}
\newtheorem*{Ack}{Acknowledgement}
\newtheorem*{con}{Convention}
\newtheorem*{thmA}{Theorem A}
\newtheorem*{corB}{Corollary B}
\newtheorem*{queC}{Question C}
\newtheorem{deff}[thm]{Definition}
\newtheorem{remark}[thm]{Remark}
\theoremstyle{remark}
\newcommand*{\e}{\ensuremath{\varepsilon}}
\newcommand*{\Ker}{\ensuremath{\text{\upshape Ker}}}
\newcommand*{\ad}{\ensuremath{\text{\upshape ad}}}
\newcommand*{\Chara}{\ensuremath{\text{\upshape char}}}
\newcommand*{\Grp}{\ensuremath{\text{\upshape G}}}
\newcommand*{\PS}{\Gamma}
\def\dim{\operatorname{dim}}
\begin{document}
\thispagestyle{empty}

\title{Locality criteria for cocommutative Hopf algebras}

\author{Xingting Wang}

\address{
Department of Mathematics\\ University of Washington\\ Seattle, WA 98195\\Department of Mathematics}

\email{xingting@uw.edu}

\keywords{Hopf algebras, Local algebras, Coradical filtration}

\subjclass[2010]{16T05}

\begin{abstract}
We prove that a finite-dimensional cocommutative Hopf algebra $H$ is local, if and only if the subalgebra generated by the first term of its coradical filtration $H_1$ is local. In particular if $H$ is connected, $H$ is local if and only if all the primitive elements of $H$ are nilpotent. 
\end{abstract}
\maketitle

\section{Introduction}
Assume the base field is algebraically closed. The structure theorem for cocommutative Hopf algebras is due to Milnor, Moore, Cartier and Kostant around 1963 \cite[Theorem 1.1]{andruskiewitsch2000finite}. In characteristic zero, it says that any cocommutative Hopf algebra is a smash product of the universal enveloping algebra of a Lie algebra and a group algebra. The statement becomes less definite in positive characteristic,  where the universal enveloping algebra of a Lie algebra is replaced by any connected cocommutative Hopf algebra (see \cite[\S 5.6]{montgomery1993hopf}).

Preliminary results are given in Section 2. In this paper, we investigate one ring-theoretic property for finite-dimensional cocommutative Hopf algebras, i.e., the criteria for locality. Let $H$ be any Hopf algebra. We say that $H$ is local as an algebra if it has a unique maximal (one-sided) ideal. Also denote by $H_1$ the first term of the coradical filtration of $H$. We will prove the following main result:

\begin{thmA} Let $H$ be a finite-dimensional cocommutative Hopf algebra over any arbitrary field $k$. Then $H$ is local if and only if the subalgebra generated by $H_1$ is local.
\end{thmA}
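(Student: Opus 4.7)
\emph{Setup and forward direction.} Write $K := \langle H_1 \rangle$; the plan is to split the equivalence into two implications and funnel both through the coradical $H_0$. First I would verify that $K$ is in fact a sub-Hopf algebra of $H$: $H_1$ is a subcoalgebra, and since the antipode preserves the coradical filtration, $S(H_1) \subseteq H_1 \subseteq K$, so $K$ is $S$-stable. Next, for any sub-Hopf algebra $L$ of a local finite-dimensional Hopf algebra $H$, the composition $L \hookrightarrow H \twoheadrightarrow H/J(H)$ has image a finite-dimensional subalgebra of the division ring $H/J(H)$, hence itself a division ring; combined with the standard inclusion $J(L) \subseteq J(H)$ (every element of $J(L)$ is nilpotent and so lies in $J(H)$), this forces $J(L) = L \cap J(H)$ and $L/J(L)$ a division ring, so $L$ is local. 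Applied to $K \subseteq H$, this proves the forward direction ``$H$ local $\Rightarrow K$ local''; applied to the chain $H_0 \subseteq K$, it shows that locality of $K$ already forces locality of $H_0$. The reverse direction of the theorem therefore reduces to the following intermediate claim: \emph{for a finite-dimensional cocommutative Hopf algebra $H$, if $H_0$ is local, then $H$ is local.}

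\emph{Proof of the claim.} Let $H^\circ$ denote the irreducible component of $H$ containing $1$---the largest connected sub-Hopf algebra of $H$. Finite-dimensionality and connectedness of $H^\circ$ dualise to the local commutative ring $(H^\circ)^*$ having a nilpotent $\mathfrak{m}$-adic filtration, whence the augmentation ideal $(H^\circ)^+$ is nilpotent in $H^\circ$. Since $H$ is cocommutative, $H^\circ$ is normal in $H$, and the Cartier--Kostant--Milnor--Moore-type decomposition $H \cong H^\circ \,\#\, H_0$ (equivalently, the Nichols--Zoeller freeness of $H$ over $H^\circ$ with complement identified with $H_0$) provides an isomorphism of Hopf algebras $H / H(H^\circ)^+ \cong H_0$. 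For any simple left $H$-module $V$, nilpotency of $(H^\circ)^+$ forces $V^{(H^\circ)^+} := \{v \in V : (H^\circ)^+ v = 0\}$ to be non-zero, while normality of $H^\circ$ makes $V^{(H^\circ)^+}$ an $H$-submodule of $V$; hence by simplicity $V^{(H^\circ)^+} = V$, and $V$ factors through $H / H(H^\circ)^+ \cong H_0$. This identifies the simple $H$-modules with the simple $H_0$-modules, so $H$ has a unique simple module iff $H_0$ does---that is, $H$ is local iff $H_0$ is.

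\emph{Main obstacle.} The principal technical difficulty is justifying the structural inputs over an arbitrary base field $k$: the normality of $H^\circ$ in $H$, the decomposition $H \cong H^\circ \,\#\, H_0$, and the identification $H / H(H^\circ)^+ \cong H_0$. The cleanest route is to base-change to $\bar k$, where the Cartier--Kostant--Milnor--Moore theorem applies directly, and then to descend the conclusion back to $k$---one must take care since locality of an algebra is not automatically preserved under arbitrary scalar extension. A more uniform alternative is to work over $k$ throughout, establishing the required isomorphism by a dimension count based on Nichols--Zoeller freeness, bypassing any explicit smash-product structure.
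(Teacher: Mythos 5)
Your forward direction is fine, but the reduction of the converse collapses at the intermediate claim ``if $H_0$ is local then $H$ is local'': that claim is false, and the step you use to prove it --- ``finite-dimensionality and connectedness of $H^\circ$ dualise to \dots whence $(H^\circ)^+$ is nilpotent'' --- inverts the duality. Connectedness of $H^\circ$ is equivalent to locality of $(H^\circ)^*$, not of $H^\circ$ itself; nilpotency of $(H^\circ)^+$ is instead equivalent to $(H^\circ)^*$ being \emph{connected} (Lemma \ref{2duality}(iii)). Concretely, take $\Chara\, k=p$ and $H=k[x]/(x^p-x)$ with $x$ primitive, i.e.\ the restricted enveloping algebra of the one-dimensional restricted Lie algebra with $x^{[p]}=x$. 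This is a connected cocommutative Hopf algebra, so $H_0=k$ is local, yet $x\in H^+$ is not nilpotent, so $H$ is not local (over a field containing $\mathbb{F}_p$ it is a product of $p$ copies of $k$). This is not a counterexample to the theorem itself --- here $k\langle H_1\rangle=H$ is likewise non-local --- but it shows that passing from $k\langle H_1\rangle$ down to $H_0$ discards exactly the information (the behaviour of the primitive and skew-primitive elements) that the theorem is about.

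The case your reduction throws away, namely ``$k\langle H_1\rangle$ local $\Rightarrow H$ local'' for $H$ \emph{connected} cocommutative, is the technical heart of the paper and cannot be obtained from the smash-product decomposition alone. The paper's route: after base change to $\bar k$, write $H\cong H_e\# kG$ as you do, note $kG\subseteq H_1\subseteq k\langle H_1\rangle$ so $kG$ is local; but for the connected component $H_e$ one needs the increasing chain of normal Hopf subalgebras $\Gamma^n(H_e)=k\langle (H_e)_{p^{n-1}}\rangle$, the duality $\Gamma^n(H_e)=\bigl(H_e^*/\Gamma_n(H_e^*)\bigr)^*$ with the subalgebras of $p^n$-th powers in the commutative dual, and the resulting fact that every factor $\Gamma^n/\Gamma^{n-1}$ is local as soon as $\Gamma^1$ is; locality of $H_e$ then follows by climbing the chain and multiplying nilpotency degrees. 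You would need to supply an argument of comparable substance for the connected case before the rest of your outline can be assembled.
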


We introduce the lower (resp., upper) power series for commutative (resp., cocommutative) Hopf algebras in positive characteristic in Section \ref{ULPS}. Our approach is based on the duality theorem for those two series proved in Section \ref{Duality}. By using it, we verify a special case for Theorem A when the cocommutative Hopf algebra is connected in Section \ref{connected}. The Cartier-Kostant-Milnor-Moore theorem is reviewed in Section \ref{PCHA} and the complete proof of Theorem A is given in Section \ref{TMT}. We also provide the following corollary in Section \ref{TMT}:

\begin{corB}
Let $H$ be a finite-dimensional connected Hopf algebra. Then the following are equivalent:
\begin{itemize}
\item[(i)] $H$ is local.
\item[(ii)] $u(\mathfrak g)$ is local, where $\mathfrak g$ is the primitive space of $H$.
\item[(iii)] All the primitive elements of $H$ are nilpotent.
\end{itemize}
\end{corB}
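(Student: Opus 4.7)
The plan is to reduce to a question about restricted enveloping algebras by invoking Theorem A together with the structural results of Section \ref{PCHA}.

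For (i)$\Leftrightarrow$(ii), since $H$ is connected we have $H_0 = k\cdot 1$ and hence $H_1 = k\cdot 1 \oplus \mathfrak{g}$ with $\mathfrak{g} = \Prim(H)$. The subalgebra generated by $H_1$ is therefore the subalgebra $K$ of $H$ generated by $\mathfrak{g}$. In positive characteristic $\mathfrak{g}$ inherits a restricted Lie structure from $H$, so the inclusion $\mathfrak{g}\hookrightarrow H$ extends to a Hopf algebra morphism $\phi: u(\mathfrak{g}) \to H$ whose image is $K$. Because $K$ is a connected cocommutative sub-Hopf algebra generated by its primitive space, the Cartier--Kostant--Milnor--Moore theorem recalled in Section \ref{PCHA} forces $\phi$ to induce an isomorphism $K \cong u(\mathfrak{g})$. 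Theorem A then yields (i)$\Leftrightarrow$(ii). In characteristic zero, finite-dimensional connectedness forces $\mathfrak{g} = 0$ and $H = k$, rendering all three conditions trivially true.

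For (ii)$\Leftrightarrow$(iii), work inside $A := u(\mathfrak{g})$ with augmentation $\varepsilon$ and augmentation ideal $I := \ker\varepsilon$, which is generated by $\mathfrak{g}$. Because $A$ is finite-dimensional, $A$ is local if and only if $I$ is its Jacobson radical, if and only if $I$ is nilpotent. The implication (ii)$\Rightarrow$(iii) is then immediate: each $x \in \mathfrak{g} \subseteq I$ is nilpotent in $A$, and since $A$ embeds into $H$ via the isomorphism above, $x$ is nilpotent in $H$. Conversely, the identity $x^p = x^{[p]}$ holding in $A$ translates nilpotence of each primitive in $H$ into $[p]$-nilpotence of every element of $\mathfrak{g}$; the restricted version of Engel's theorem then promotes this pointwise nilpotence to nilpotence of the whole augmentation ideal $I$.

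The main obstacle is the direction (iii)$\Rightarrow$(ii), which requires upgrading pointwise $[p]$-nilpotence on $\mathfrak{g}$ to nilpotence of the augmentation ideal of $u(\mathfrak{g})$. I would either cite the restricted Engel theorem directly, or, more in keeping with the methods of this paper, derive it from the upper/lower power series duality developed in Section \ref{ULPS} together with the connected case of Theorem A already established in Section \ref{connected}.
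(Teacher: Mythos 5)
Your proposal is correct and takes essentially the same route as the paper: identify $k\langle H_1\rangle$ with $u(\mathfrak g)$ (using that a finite-dimensional connected Hopf algebra lives in characteristic $p$), invoke Theorem A for (i)$\Leftrightarrow$(ii), get (ii)$\Rightarrow$(iii) from nilpotence of the augmentation ideal, and settle (iii)$\Rightarrow$(ii) by Engel's theorem. The only difference is that the paper spells out the step you leave as a citation: from $x^{p^n}=0$ it deduces $(\ad x)^{p^n}=\ad(x^{p^n})=0$, applies ordinary Engel's theorem to conclude $\mathfrak g$ is nilpotent, and then argues that every irreducible restricted representation of $\mathfrak g$ is one-dimensional and trivial, which is exactly the "restricted Engel theorem" you propose to quote.
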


This corollary uses Engel's theorem in the representation theory of Lie algebras. Hence we can view our result as a generalization of Engel's theorem in the category of cocommutative Hopf algebras of finite dimension. In the last section, two examples are provided in order to show that the cocommutativity condition is necessary in Theorem A. At last, we ask the following natural question.
\begin{queC} How to generalize this locality criteria to infinite-dimensional cocommutative Hopf algebras?
\end{queC}

\begin{Ack}
The author is grateful to Professor James Zhang, Guangbin Zhang and Cris Negron for their valuable suggestions to this paper. The research was partially supported by the US National Science Foundation [DMS-0855743].
\end{Ack}

\section{Preliminary results}
Throughout we work over a field $k$ and $\otimes$ denotes $\otimes_k$. For a finite-dimensional $k$-linear space $V$, identify $V$ with its double dual $V^{**}$ through the natural isomorphism. The standard collection $(H,m,u,\Delta,\e,S)$ is used to denote a Hopf algebra. We first recall some basic definitions and facts regarding $H$.

\begin{deff}\cite[Definitions 5.1.5, 5.2.1]{montgomery1993hopf}
The \emph{coradical} $H_0$ of $H$ is the sum of all simple subcoalgebras of $H$. The Hopf algebra $H$ is \emph{pointed} if every simple subcoalgebra is one-dimensional, and $H$ is \emph{connected} if $H_0$ is one-dimensional. For each $n\geq 1$, inductively set  \[H_n=\Delta^{-1}(H\otimes H_{n-1} + H_0\otimes H).\] The chain of subcoalgebras $H_0\subseteq H_1 \subseteq \ldots \subseteq H_{n-1}\subseteq H_n \subseteq\ldots $ is the \emph{coradical filtration} of $H$. 
\end{deff}

\begin{lem}\label{2duality}
Let $H$ be a finite-dimensional Hopf algebra.
\begin{itemize}
\item[(i)] $H^*$ has a natural Hopf algebra structure and $H=H^{**}$ under the natural isomorphism.
\item[(ii)] Denote $J$ as the Jacobson radical of $H^*$. Then $H_n=(H^*/J^{n+1})^*$ for any $n\ge 0$. 
\item[(iii)] $H$ is local if and only if its augmentation ideal $H^+=\Ker \e$ is nilpotent if and only if $H^*$ is connected.
\item[(iv)] Let $L$ be a Hopf subalgebra of $H$. Then $L_n=L\cap H_n\subseteq H_n$ for all $n\ge 0$. Moreover, if $H$ is connected, so is $L$.
\item[(v)] The antipode $S$ is bijective and $S(H_n)=H_n$ for all $n\ge 0$.
\end{itemize}
\end{lem}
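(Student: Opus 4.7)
The plan is to treat (i) and (v) as standard facts about finite-dimensional Hopf algebras (see Montgomery, Chapter 5) that I only indicate briefly, and to concentrate the real work on the duality statement (ii), from which (iii) follows almost immediately. Part (iv) is a separate induction on the coradical filtration. For (i) I dualize each structure map, using finite-dimensionality to identify $(H \otimes H)^*$ with $H^* \otimes H^*$ and $H$ with $H^{**}$. For (v), the bijectivity of $S$ in finite dimension is classical (Larson--Sweedler), and $S(H_n) = H_n$ is obtained by induction: $S$ is a coalgebra anti-morphism, so it sends simple subcoalgebras to simple subcoalgebras and hence $S(H_0) = H_0$, and the identity $\Delta \circ S = (S \otimes S) \circ \tau \circ \Delta$ propagates this through the defining recursion of $H_n$.

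The core is (ii). The evaluation pairing $H^* \otimes H \to k$ satisfies $\langle fg, h\rangle = \sum \langle f, h_{(1)}\rangle \langle g, h_{(2)}\rangle$, so multiplication in $H^*$ dualizes to comultiplication in $H$. Since $(H^*/J^{n+1})^* \cong (J^{n+1})^\perp$ canonically, the claim reduces to $H_n = (J^{n+1})^\perp$, which I prove by induction on $n$. For the base $n=0$: $H^*/J$ is semisimple, so its dual $J^\perp \subseteq H$ is a cosemisimple subcoalgebra, hence contained in $H_0$; conversely $H_0^*$ is semisimple (as $H_0$ is cosemisimple), so $H^*/H_0^\perp$ is semisimple, forcing $J \subseteq H_0^\perp$ and thus $H_0 \subseteq J^\perp$. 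For the inductive step I use the standard annihilator identity $(J \otimes J^n)^\perp = J^\perp \otimes H + H \otimes (J^n)^\perp$ inside $H \otimes H$: since $J^{n+1}$ is spanned by products $fg$ with $f \in J$ and $g \in J^n$, an element $h$ pairs to zero with all of $J^{n+1}$ iff $\Delta(h)$ annihilates $J \otimes J^n$, iff $\Delta(h) \in H_0 \otimes H + H \otimes H_{n-1}$ by the inductive hypothesis, iff $h \in H_n$.

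Granted (ii), I deduce (iii) in a few lines: applying (ii) to $H^*$ in place of $H$ gives $(H^*)_0 = (H/J(H))^*$, so $H^*$ is connected iff $H/J(H) \cong k$ iff $J(H)$ is a maximal ideal. Since $J(H)$ is nilpotent in finite dimension and $H^+ = \ker \e$ is always a maximal ideal, these conditions coincide with $H^+ = J(H)$, that is, with $H^+$ being nilpotent, that is, with $H$ having a unique maximal ideal. For (iv), the inclusion $L_n \subseteq L \cap H_n$ is immediate from the inductive definition, the case $n=0$ using that a simple subcoalgebra of $L$ is in particular a simple subcoalgebra of $H$. For the reverse inclusion, given $\ell \in L \cap H_n$ I use $\Delta(\ell) \in (L \otimes L) \cap (H \otimes H_{n-1} + H_0 \otimes H)$ together with the induction hypotheses $L_{n-1} = L \cap H_{n-1}$ and $L_0 = L \cap H_0$ to conclude $\Delta(\ell) \in L \otimes L_{n-1} + L_0 \otimes L$, whence $\ell \in L_n$. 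When $H$ is connected, $L_0 = L \cap H_0 = k \cdot 1$, so $L$ is connected. The main obstacle throughout is the orthogonal-complement bookkeeping in the base case of (ii) and the intersection identity in (iv); both are standard finite-dimensional linear algebra but require care with how $\perp$ interacts with sums of tensor products.
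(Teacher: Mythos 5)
Your proposal is correct in substance, but it takes a noticeably more self-contained route than the paper, which proves almost nothing directly: the paper cites Montgomery for (i), (ii) and (iv) and Sweedler for the bijectivity of $S$, gives only the short Artinian-ring argument for the first equivalence in (iii) (citing \cite[Remark 5.1.7]{montgomery1993hopf} for the equivalence with connectedness of $H^*$), and does the same induction you sketch for $S(H_n)=H_n$. Your proof of (ii) is the standard wedge--radical duality argument ($H_n=(J^{n+1})^{\perp}$ by induction, using the annihilator identity $(J\otimes J^{n})^{\perp}=J^{\perp}\otimes H+H\otimes (J^{n})^{\perp}$), and deducing the connectedness criterion in (iii) by applying (ii) to $H^*$ is clean and arguably preferable to the paper's external citation; the two formulations of (iii) then agree. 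Your direct proof of (iv) is also valid: the modular-law identity $(L\otimes L)\cap(H\otimes H_{n-1}+H_0\otimes H)=L\otimes(L\cap H_{n-1})+(L\cap H_0)\otimes L$ does hold (choose complements of $H_0$ and $H_{n-1}$ adapted to $L$), though you should state it explicitly since it is exactly the point where such arguments usually go wrong.

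One small omission: in (iv) you use $L_0=L\cap H_0$ as an induction hypothesis but only argue the inclusion $L_0\subseteq L\cap H_0$ (simple subcoalgebras of $L$ are simple subcoalgebras of $H$). The reverse inclusion $L\cap H_0\subseteq L_0$ needs one more line: $L\cap H_0$ is a subcoalgebra of the cosemisimple coalgebra $H_0$, hence is itself cosemisimple and therefore contained in the coradical $L_0$ of $L$. With that sentence added the base case, and hence the whole induction, is complete.
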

\begin{proof}
(i) is well-known, see \cite[Theorem 9.1.3]{montgomery1993hopf}.

(ii) comes from \cite[Proposition 5.2.9(2)]{montgomery1993hopf}.

(iii) First suppose that $H$ is local. Then it is clear that the Jacobson radical of $H$ is the unique maximal ideal of $H$. Since $H$ already has a maximal ideal, i.e., the augmentation ideal $H^+$, the Jacobson radical of $H$ equals $H^+$. It follows that $H^+$ is nilpotent for the Jacobson radical of any Artinian ring is nilpotent. Conversely, suppose that $H^+$ is nilpotent. Then the Jacobson radical of $H$ contains $H^+$ hence it equals $H^+$ for $H^+$ is already maximal. Finally, the fact that $H$ is local if and only if $H^*$ is connected follows from \cite[Remark 5.1.7]{montgomery1993hopf}.

(iv) is a consequence of \cite[Lemma 5.2.12]{montgomery1993hopf}.

(v) By \cite[Corollary 5.1.6(3)]{S}, $S$ is bijective. Moreover, $S$ is both an anti-coalgebra and anti-algebra map by \cite[Proposition 4.0.1]{S}. Since the opposite coalgebra of a simple coalgebra is still simple, it is clear that $S(H_0)=H_0$. In the definition of coradical filtration, it is the same to define $H_n=\Delta^{-1}(H\otimes H_0+H_{n-1}\otimes H)$ for each $n\ge 1$. Then by induction, it is easy to show that $S(H_n)=H_n$ for all $n\ge 0$.
\end{proof}

\begin{deff}\cite[Definitions 3.4.1, 3.4.5]{montgomery1993hopf}
A Hopf subalgebra $K$ of $H$ is \emph{normal} if both
\begin{eqnarray*}
\sum (Sh_1)kh_2\subseteq K\ \text{and}\ \sum h_1k(Sh_2)\subseteq K,
\end{eqnarray*}
for all $k\in K,h\in H$. A Hopf ideal $I$ of $H$ is \emph{normal} if both
\begin{eqnarray*}
\sum h_1Sh_3\otimes h_2\subseteq H\otimes I\ \text{and}\ \sum h_2\otimes (Sh_1)h_3\subseteq I\otimes H,
\end{eqnarray*} 
for all $h\in I$. 
\end{deff}

\begin{remark}\label{1R}
Let $H$ be any Hopf algebra with Hopf subalgebra $K$ and Hopf ideal $I$. 
\begin{itemize}
\item[(a)] If $K$ is normal, then $K^+H=HK^+$ is a Hopf ideal of $H$. 
\item[(b)] For finite-dimensional $H$, the Hopf ideal $I$ is normal if and only if $(H/I)^*$ is a normal Hopf subalgebra of $H^*$.
\end{itemize}
\end{remark}

\begin{con}
Throughout, when $K$ is a normal Hopf subalgebra of $H$, we use $H/K$ to denote the quotient Hopf algebra $H/HK^+$.
\end{con}

\begin{lem}\label{dim}
Suppose $\Chara k=p>0$. Let $K\subseteq H$ be finite-dimensional connected Hopf algebras. Then 
\begin{eqnarray*}
\dim H/\dim K\ge p^{\dim H_n/K_n}, 
\end{eqnarray*}
for any $n$ such that $K_i=H_i$ for all $i< n$.
\end{lem}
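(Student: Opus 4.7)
The plan is to pass to the dual. By Lemma~\ref{2duality}(iii), since $H$ and $K$ are connected, $H^*$ and $K^*$ are finite-dimensional local Hopf algebras. Let $J$ denote the augmentation ideal (equivalently, Jacobson radical) of $H^*$ and set $I = \Ker(H^* \twoheadrightarrow K^*)$, a Hopf ideal of $H^*$. By Lemma~\ref{2duality}(ii), $\dim H_n = \dim H^*/J^{n+1}$, with the analogous formula for $K^*$ using its maximal ideal $J/I$. The hypothesis $K_i = H_i$ for $i < n$ thus forces $I \subseteq J^n$, and a direct count gives
$$m \;:=\; \dim H_n/K_n \;=\; \dim\bigl((I + J^{n+1})/J^{n+1}\bigr),$$
namely the dimension of the image of $I$ in the $n$-th graded piece $J^n/J^{n+1}$ of $\gr_J(H^*)$.

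Next, I invoke the standard fact that any finite-dimensional local Hopf algebra over a field of characteristic $p$ has dimension a power of $p$. Writing $\dim H = p^a$ and $\dim K = p^b$ with $a \geq b$, the inequality reduces to showing $a - b \geq m$.

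The key construction is a strictly ascending chain of connected Hopf subalgebras
$$K \;=\; L_0 \;\subsetneq\; L_1 \;\subsetneq\; \cdots \;\subsetneq\; L_m \;\subseteq\; H,$$
with each $\dim L_i/\dim L_{i-1}$ a power of $p$ (hence at least $p$). Pick representatives $x_1, \ldots, x_m \in H_n$ of a basis of $H_n/K_n$ and let $L_i$ be the Hopf subalgebra of $H$ generated by $L_{i-1}$ and $x_i$. Using $H_{n-1} = K_{n-1} \subseteq L_{i-1}$ and the Heyneman--Radford bound $\Delta(H_n) \subseteq \sum_{r+s \leq n} H_r \otimes H_s$ for connected Hopf algebras, one gets $\Delta(x_i) - x_i \otimes 1 - 1 \otimes x_i \in L_{i-1} \otimes L_{i-1}$, so $x_i$ is ``primitive over $L_{i-1}$''. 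By Lemma~\ref{2duality}(iv) each $L_i$ is connected, hence of $p$-power dimension; being strictly larger than $L_{i-1}$, the ratio $\dim L_i / \dim L_{i-1}$ is then at least $p$. Iterating gives $\dim L_m \geq p^m \dim K$, whence $\dim H / \dim K \geq p^m$.

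I expect the main obstacle to be arranging the chain to be strictly increasing -- specifically, ensuring $x_i \notin L_{i-1}$ at each stage, since earlier $x_j$'s (and their products and $p$-th powers) may generate unexpected elements of $H_n$ inside $L_{i-1}$. A careful selection of the $x_i$'s -- for instance, always choosing $x_i$ so that its class spans a line in $H_n/(L_{i-1} \cap H_n)$ and tracking the induced coradical filtrations via Lemma~\ref{2duality}(iv) -- should resolve this and produce the chain of the required length.
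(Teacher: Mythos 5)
The paper does not prove this lemma internally --- it simply cites \cite[Lemma 4.1]{wang2012connected} and reduces to the case where $n$ is minimal with $K_n\neq H_n$ --- so you are attempting something the paper itself outsources. Your dualization step and the identification $m=\dim\bigl((I+J^{n+1})/J^{n+1}\bigr)$ are correct but end up unused. The substance of your argument is the chain $K=L_0\subsetneq\cdots\subsetneq L_m$, and the obstacle you flag at the end is a genuine gap that your proposed fix does not close. Concretely, take $K=k$ and $H=k[x]/(x^{p^2})$ with $x$ primitive: then $n=1$, $H_1=k\oplus kx\oplus kx^p$, so $m=2$, but adjoining $x_1=x$ already gives $L_1=H$, which contains $x_2=x^p$, and no second strict step exists. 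Your ``careful selection'' (always choosing $x_i\notin (L_{i-1})_n$) does make the chain strictly increasing, but it may then terminate after $r<m$ steps, and ``each strict step has index at least $p$'' only yields $p^{r}$, which can be strictly smaller than $p^{m}$. To recover $p^m$ by telescoping you would need each single-generator step to satisfy $\dim L_i/\dim L_{i-1}\ge p^{\,\dim (L_i)_n-\dim (L_{i-1})_n}$ --- but that is precisely the lemma again for the pair $L_{i-1}\subseteq L_i$, so as it stands the argument is circular. (In the example above the bound survives only because the single step happens to have index $p^2$; that is the content of the missing single-generator estimate, not a consequence of strictness.)

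A secondary issue: your claim that a strictly larger connected Hopf subalgebra has index at least $p$ rests on the assertion that every finite-dimensional connected Hopf algebra in characteristic $p$ has $p$-power dimension. This is true, but it is immediate from Waterhouse's structure theorem only in the cocommutative case, whereas the lemma is stated for arbitrary connected Hopf algebras; in general it is a nontrivial theorem whose standard proofs run through index estimates of exactly the kind you are trying to establish. So even the part of your argument that works leans on an input at least as deep as the cited \cite[Lemma 4.1]{wang2012connected}. The realistic options are either to cite that lemma, as the paper does, or to prove the single-generator estimate $\dim K\langle x\rangle/\dim K\ge p^{\,\dim K\langle x\rangle_n-\dim K_n}$ directly; that is where all the work lies, and your proposal does not address it.
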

\begin{proof}
It follows from \cite[lemma 4.1]{wang2012connected}, the inequality holds for the minimal integer $n$ such that $K_n\neq H_n$. Then it is easy to check the statement.
\end{proof}

\begin{remark}\label{2R}
Let $K\subseteq H$ be finite-dimensional Hopf algebras. By \cite[pp. 290]{normalbasis}, $H$ is isomorphic to $K\otimes H/K^+H$ as left $K$-modules and as right $H/K^+H$-comodules. As a consequence, $\dim(H/K)=\dim H/\dim K$ provided that $K$ is normal.
\end{remark}

\begin{lem}\label{ndual}
Let $K\subseteq L$ be normal Hopf subalgebras of a finite-dimensional Hopf algebra $H$. Then $(H/K)^*/(H/L)^*=(L/K)^*$. In particular, $H^*/(H/K)^*=K^*$ for any normal Hopf subalgebra $K$.
\end{lem}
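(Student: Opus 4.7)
\medskip
\noindent\textbf{Proof plan.} The idea is to build and then dualise the short exact sequence of Hopf algebras
\[ k\longrightarrow L/K\longrightarrow H/K\longrightarrow H/L\longrightarrow k. \]
The ``in particular'' statement will then be the special case $K=k$ (the trivial Hopf subalgebra, which is vacuously normal, with $H/k=H$ and $K^+=0$).

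Since the normality identities for $K\subseteq H$ quantify over all $h\in H$, they hold a fortiori for $h\in L$, so $K$ is normal in $L$ and $L/K=L/LK^+$ is a Hopf algebra. Because $LK^+\subseteq HK^+$, the composite $L\hookrightarrow H\twoheadrightarrow H/K$ descends to a Hopf algebra map $\varphi\colon L/K\to H/K$. To see $\varphi$ is injective I would appeal to Remark~\ref{2R} applied to $L\subseteq H$: the isomorphism $H\cong L\otimes H/L^+H$ of left $L$-modules exhibits $L$ as a direct summand of $H$ as a left $L$-module, which forces $L\cap HK^+=LK^+$. For the cokernel, normality of $L$ in $H$ gives $HL^+=L^+H$, and $K\subseteq L$ gives $HK^+\subseteq HL^+$; consequently
\[ (H/K)\big/(L/K) \;=\; H/HL^+ \;=\; H/L. \]
Normality of $L/K$ in $H/K$ is inherited by reducing the normality identities for $L\subseteq H$ modulo $HK^+$, so the displayed sequence is indeed exact in the category of Hopf algebras.

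Dualising: all Hopf algebras in sight are finite dimensional, and $k$-linear duality is exact and converts inclusions of Hopf algebras to surjections and vice versa, so the sequence above yields
\[ k\longrightarrow(H/L)^*\longrightarrow(H/K)^*\longrightarrow(L/K)^*\longrightarrow k. \]
Applying Remark~\ref{1R}(b) to the Hopf algebra $H/K$ and its normal Hopf ideal $HL^+/HK^+$ (whose quotient is $H/L$) identifies $(H/L)^*$ as a normal Hopf subalgebra of $(H/K)^*$ with quotient $(L/K)^*$. This is precisely the asserted identity $(H/K)^*/(H/L)^*=(L/K)^*$.

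The substantive technical point is the injectivity of $\varphi$; the freeness/direct-summand input from Remark~\ref{2R} is doing the real work, and the remainder is routine bookkeeping with quotients of Hopf algebras.
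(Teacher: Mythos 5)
Your proposal is essentially correct, and it is a genuinely different contribution from what the paper does: the paper disposes of this lemma with a one-line citation to \cite[Lemma 5.1]{wang2012connected}, whereas you reconstruct the standard argument from the exact sequence $k\to L/K\to H/K\to H/L\to k$ and its dual. The skeleton is right, and the two key inputs you identify (the normal basis theorem via Remark \ref{2R}, and the normality--duality correspondence of Remark \ref{1R}(b)) are the correct ones.

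Two steps are asserted rather than proved and deserve more care. First, the injectivity of $\varphi$, i.e.\ $L\cap HK^+=LK^+$: saying that $L$ is ``a direct summand of $H$ as a left $L$-module'' is not quite enough as stated, because you must also know that the complement is preserved by the multiplication producing $HK^+$, and that the copy of $L$ in the normal basis decomposition is $L\cdot 1$ itself. The clean version is: $H\cong L\otimes (H/L^+H)$ as left $L$-modules with $1\mapsto 1\otimes\bar 1$, so writing $H=L\oplus M$ as left $L$-modules and using $K^+\subseteq L$ together with normality ($HK^+=K^+H$ and $LK^+=K^+L$) gives $K^+H=K^+L\oplus K^+M$, whence $HK^+\cap L=LK^+$. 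Equivalently, invoke faithful flatness (purity) of $H$ over $L$ directly. Second, the last step overreads Remark \ref{1R}(b): that remark tells you $(H/L)^*$ is a \emph{normal Hopf subalgebra} of $(H/K)^*$, but it does not identify the quotient $(H/K)^*\big/(H/K)^*\left((H/L)^*\right)^+$ with $(L/K)^*$. You do get a surjection $(H/K)^*\twoheadrightarrow (L/K)^*$ by dualizing $\varphi$, and its kernel visibly contains $(H/K)^*\left((H/L)^*\right)^+$; equality then follows from a dimension count using Remark \ref{2R}, since $\dim (H/K)^*/\dim (H/L)^*=\dim L/\dim K=\dim (L/K)^*$. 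With those two points filled in, your argument is a complete and self-contained proof of the lemma.
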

\begin{proof}
The result follows from \cite[lemma 5.1]{wang2012connected}. 
\end{proof}

\begin{lem}\label{ILI}
Let $H$ be a finite-dimensional Hopf algebra, and $E\supseteq k$ be a field extension. Then
\begin{itemize}
\item[(i)] $H$ is local if and only if any subalgebra of $H$ is local.
\item[(ii)] $H$ is local if and only if $H\otimes E$ is local. 
\item[(iii)] $(H\otimes E)_n\subseteq H_n\otimes E$ for all $n\ge 0$
\end{itemize}
\end{lem}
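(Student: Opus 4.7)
The plan is to treat the three parts in order, each reducing to the duality and nilpotency statements of Lemma~\ref{2duality}.

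For (i), the ``if'' direction is trivial because $H$ is a subalgebra of itself. For the converse, suppose $H$ is local; by Lemma~\ref{2duality}(iii), $H^+$ is the unique maximal ideal of $H$, is nilpotent, and satisfies $H/H^+\cong k$. Given a subalgebra $A\subseteq H$, the ideal $\mathfrak{m}=A\cap H^+$ is nilpotent, and the embedding $A/\mathfrak{m}\hookrightarrow H/H^+=k$ forces equality, so $\mathfrak{m}$ is a maximal ideal of $A$. To conclude, I would show that every $a\in A\setminus\mathfrak{m}$ is a unit in $A$: since $a\notin H^+$, it is invertible in $H$, and a standard minimal-polynomial argument (letting $p(x)=x^n+c_{n-1}x^{n-1}+\cdots+c_0$ be the minimal polynomial of $a$ over $k$, if $c_0=0$ one could cancel the invertible $a$ in $H$ to contradict minimality) shows $c_0\ne 0$, so $a^{-1}$ is a polynomial in $a$ and lies in $A$.

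For (ii), I apply Lemma~\ref{2duality}(iii): $H$ is local if and only if $H^+$ is nilpotent. The identification $(H\otimes E)^+=H^+\otimes E$ together with $(H^+\otimes E)^m=(H^+)^m\otimes E$ shows that $H^+$ is nilpotent if and only if $(H\otimes E)^+$ is, giving the equivalence.

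For (iii), I pass to duals. Let $J$ be the Jacobson radical of $H^*$ and $\widetilde{J}$ the Jacobson radical of $(H\otimes E)^*\cong H^*\otimes E$. By Lemma~\ref{2duality}(ii), $H_n=(H^*/J^{n+1})^*$ and $(H\otimes E)_n=\bigl((H^*\otimes E)/\widetilde{J}^{n+1}\bigr)^*$. Since $J$ is nilpotent, so is $J\otimes E$, hence $J\otimes E\subseteq \widetilde{J}$ and therefore $J^{n+1}\otimes E\subseteq \widetilde{J}^{n+1}$; dualizing the resulting surjection and identifying $\bigl((H^*/J^{n+1})\otimes E\bigr)^*$ with $H_n\otimes E$ yields the inclusion $(H\otimes E)_n\subseteq H_n\otimes E$. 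The main subtlety, which I fortunately do not need to resolve, is that $\widetilde{J}$ may strictly contain $J\otimes E$ for inseparable extensions $E/k$; only the easy containment is needed, and that comes for free from nilpotency.
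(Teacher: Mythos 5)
Your proof is correct and follows essentially the same route as the paper: all three parts reduce to Lemma~\ref{2duality}, with (i) via the nilpotent codimension-one ideal $A\cap H^+$, (ii) via $(H\otimes E)^+=H^+\otimes E$, and (iii) via $J\otimes E\subseteq\widetilde{J}$ (from nilpotency of $J$) and dualizing the resulting surjection. The only difference is cosmetic: in (i) the paper finishes more quickly by observing that the nilpotent ideal $A\cap H^+$ lies in the Jacobson radical of $A$, which in turn lies in the maximal ideal $A\cap H^+$, forcing equality, so your minimal-polynomial argument for invertibility, while valid, is not needed.
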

\begin{proof}
$(\text{i})$ Suppose that $H$ is local and $A$ is any subalgebra of $H$. Denote $J$ as the Jacobson radical of $A$ and $I=H^+\cap A$. By Lemma \ref{2duality}(iii), $H^+$ is nilpotent and so is $I\subseteq H^+$. Hence $I\subseteq J$. Moreover, $J\subseteq I$ for $I$ has codimension one in $A$. Therefore $J=I$ and $A$ is local. The inverse is trivial.

$(\text{ii})$ It is easy to see that $H\otimes E$ is a Hopf algebra \cite[pp. 21]{montgomery1993hopf} and $(H\otimes E)^+=H^+\otimes E$. Suppose that $H$ is local. Then $(H\otimes E)^+$ is nilpotent for $H^+$ is. Hence $H\otimes E$ is local by Lemma \ref{2duality}(iii). Assume $H\otimes E$ to be local. Therefore $H^+=H^+\otimes 1\subseteq H^+\otimes E$ is a nilpotent ideal of $H$. So $H$ is local.

$(\text{iii})$ Denote $J$ and $J'$ as the Jacobson radicals of $H^*$ and $H^*\otimes E$. By Lemma \ref{2duality}(ii), the assertion is equivalent to that there is a surjection from $(H^*/J^n)\otimes E$ to $(H^*\otimes E)/J'^n$ for all $n\ge 0$. It is true because $J\subseteq J'$ for $J$ is nilpotent.
\end{proof}

\section{Upper and lower power series}\label{ULPS}
Throughout this section assume that $\Chara\ k=p>0$.

\begin{deff}
Define the \emph{lower power series} of a commutative Hopf algebra $H$ as: 
\begin{eqnarray*}
\PS_n(H)=\{h^{p^n}|h\in H\},
\end{eqnarray*}
for all $n\ge 0$. Define the \emph{upper power series} of a cocommutative Hopf algebra $H$ as: 
\begin{eqnarray*}
\PS^0(H)=k,\ \text{and}\ \PS^n(H)=k\langle H_{p^{n-1}}\rangle
\end{eqnarray*}
for $n\ge 1$, where $k\langle H_{p^{n-1}}\rangle $ denotes the subalgebra generated by the $p^{n-1}$-th term of its coradical filtration.
\end{deff}

\begin{prop}\label{nseries}
Regarding the above definitions, we have
\begin{itemize}
\item[(i)]  The lower power series of a commutative Hopf algebra is a descending chain of normal Hopf subalgebras.  
\item[(ii)] The upper power series of a cocommutative Hopf algebra is an increasing chain of Hopf subalgebras.
\end{itemize}
\end{prop}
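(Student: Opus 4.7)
The plan is to handle the two parts independently. For (i) I will exploit the fact that in characteristic $p$ the $p^n$-th power map on a commutative algebra is a ring endomorphism (the iterated Frobenius), so its image is automatically a subalgebra. For (ii) I will use that $H_{p^{n-1}}$ is already a subcoalgebra closed under the antipode, so the subalgebra it generates picks up the full Hopf structure for free.

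For (i), the map $F^n\colon H\to H$, $h\mapsto h^{p^n}$, is a ring homomorphism by commutativity of $H$ together with the freshman's dream, so $\PS_n(H)=\Img(F^n)$ is a subalgebra of $H$. To show it is a sub-bialgebra, I apply $\Delta$ and use commutativity of $H\otimes H$:
\[\Delta(h^{p^n})=\Delta(h)^{p^n}=\Big(\sum h_{(1)}\otimes h_{(2)}\Big)^{p^n}=\sum h_{(1)}^{p^n}\otimes h_{(2)}^{p^n}\in\PS_n(H)\otimes\PS_n(H).\]
Since $H$ is commutative, the anti-algebra antipode $S$ is actually an algebra map, so $S(h^{p^n})=S(h)^{p^n}\in\PS_n(H)$, and hence $\PS_n(H)$ is a Hopf subalgebra. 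Normality is automatic: for any $x\in\PS_n(H)$ and $h\in H$,
\[\sum(Sh_{(1)})\,x\,h_{(2)}=x\sum(Sh_{(1)})h_{(2)}=\e(h)\,x\in\PS_n(H),\]
and symmetrically on the other side. The chain is descending because $h^{p^{n+1}}=(h^p)^{p^n}\in\PS_n(H)$.

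For (ii), the key observation is that $H_{p^{n-1}}$ is a subcoalgebra of $H$ satisfying $\Delta(H_{p^{n-1}})\subseteq\sum_{i+j=p^{n-1}}H_i\otimes H_j\subseteq H_{p^{n-1}}\otimes H_{p^{n-1}}$, and by Lemma \ref{2duality}(v) it is also stable under $S$. Since $\Delta$ is an algebra homomorphism, this containment extends to the subalgebra $\PS^n(H)$ generated by $H_{p^{n-1}}$, giving $\Delta(\PS^n(H))\subseteq\PS^n(H)\otimes\PS^n(H)$. The anti-algebra property of $S$ likewise gives $S(\PS^n(H))\subseteq\PS^n(H)$. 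Thus $\PS^n(H)$ is a Hopf subalgebra, and $H_{p^{n-1}}\subseteq H_{p^n}$ immediately yields $\PS^n(H)\subseteq\PS^{n+1}(H)$.

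I do not anticipate a serious obstacle. The only conceptual point to monitor is the use of the freshman's dream in (i): without commutativity of $H$ (equivalently of $H\otimes H$), the set $\{h^{p^n}\}$ would fail even to be closed under addition. Part (ii) is essentially formal; cocommutativity is not strictly needed in the argument itself but motivates the definition, since the upper power series is the correct dual notion to the lower series after passing to the commutative Hopf algebra $H^*$, which is what the duality section will exploit.
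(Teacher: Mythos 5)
Your proposal is correct and follows essentially the same route as the paper: the Frobenius/freshman's-dream computation for $\Delta$ and $S$ in part (i), and in part (ii) the observation that the subalgebra generated by the subcoalgebra $H_{p^{n-1}}$ is automatically a sub-bialgebra, with antipode stability coming from $S$ being an anti-algebra map together with $S(H_{p^{n-1}})=H_{p^{n-1}}$ (Lemma \ref{2duality}(v)). You merely spell out the normality computation and the descending/increasing containments a bit more explicitly than the paper does; your side remark that cocommutativity is not actually used in (ii) is also accurate.
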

\begin{proof}
(i) Let $H$ be a commutative Hopf algebra. Since the base field $k$ has characteristic $p>0$, it is clear that each term of the lower power series of $H$ is a subalgebra of $H$. Moreover, we have
\begin{align*}
\Delta\left(h^{p^n}\right)=\Delta(h)^{p^n}=\left(\sum h_1\otimes h_2\right)^{p^n}=\sum h_1^{p^n}\otimes h_2^{p^n},\quad S\left(h^{p^n}\right)=S(h)^{p^n}
\end{align*}
for all $h\in H$ and $n\ge 0$. Hence $\{\PS_n(H)\}$ is a descending chain of Hopf subalgebras of $H$ and the normality follows from the commutativity of $H$.

(ii) Let $H$ be a cocommutative Hopf algebra. In the upper power series of $H$, each term $\PS^n(H)$ is already a sub-bialgebra since it is generated by the subcoalgebra $H_{p^{n-1}}$ of $H$. Hence we only need to show that $S(\PS^n(H))\subseteq \PS^n(H)$. By \cite[Proposition 4.0.1]{S}, $S$ is  an anti-algebra map. Hence it suffices to prove that $S(H_{p^{n-1}})=H_{p^{n-1}}$, which is Lemma \ref{2duality}(v).  
\end{proof}

\begin{remark}\label{normal}
Let $H$ be a finite-dimensional cocommutative Hopf algebra. All $\PS_n(H^*)^+H^*$ are normal Hopf ideals of $H^*$. Moreover, $\left(H^*/\PS_n(H^*)^+H^*\right)^*$ are normal Hopf subalgebras of $H$ by Remark \ref{1R}(b).
\end{remark}

\section{Duality}\label{Duality}
In the following two sections, assume that $k$ is algebraically closed with $\Chara\ k=p>0$. We want to prove the following duality theorem about lower and upper power series:

\begin{thm}\label{MDT}
Let $H$ be a finite-dimensional cocommutative connected Hopf algebra. Then 
\begin{eqnarray*}
\PS^n(H)=(H^*/\PS_n(H^*))^*\ \text{and}\ \PS_n(H^*)=(H/\PS^n(H))^*.
\end{eqnarray*}
\end{thm}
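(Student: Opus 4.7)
The plan is to prove the first identity $\PS^n(H) = (H^*/\PS_n(H^*))^*$; the second identity follows automatically, since once the first holds, $\PS^n(H)$ is a normal Hopf subalgebra of $H$ by Remark~\ref{normal}, and the bijection between normal Hopf subalgebras and normal Hopf ideals in a finite-dimensional Hopf algebra, combined with Lemma~\ref{ndual}, forces $(H/\PS^n(H))^* = \PS_n(H^*)$.

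For the containment $\PS^n(H) \subseteq (H^*/\PS_n(H^*))^*$, note that $(H^*/\PS_n(H^*))^*$ is the annihilator in $H = H^{**}$ of the ideal $\PS_n(H^*)^+ H^*$; it is a Hopf subalgebra of $H$, so it suffices to show that $H_{p^{n-1}}$ annihilates this ideal. For $h \in H_{p^{n-1}}$, $f \in (H^*)^+$, and $g \in H^*$,
\begin{align*}
\langle h, f^{p^n} g\rangle = \sum_{(h)} \bigl\langle \Delta^{(p^n-1)}(h_1), f^{\otimes p^n}\bigr\rangle \langle h_2, g\rangle.
\end{align*}
The standard property $\Delta^{(m-1)}(H_r) \subseteq \sum_{i_1 + \cdots + i_m \le r} H_{i_1} \otimes \cdots \otimes H_{i_m}$ implies any summand of $\Delta^{(p^n-1)}(h_1)$ comes from a tuple with $\sum i_j \le p^{n-1}$; since $f \in (H^*)^+$ pairs trivially with $H_0 = k$, a nonzero contribution would require all $i_j \ge 1$, forcing $\sum i_j \ge p^n > p^{n-1}$, a contradiction.

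For equality, I induct on $n$, with $n = 0$ trivial. Assume the theorem for $n-1$, and set $L := \PS^{n-1}(H) = (H^*/\PS_{n-1}(H^*))^*$, a normal Hopf subalgebra of $H$ by Remark~\ref{normal}. The quotient $\bar H := H/L$ has dual $\bar H^* = \PS_{n-1}(H^*)$ by Lemma~\ref{ndual}, which is local as a subalgebra of the local $H^*$ (Lemma~\ref{ILI}(i)); hence $\bar H$ is connected by Lemma~\ref{2duality}(iii). Applying the $n = 1$ case to $\bar H$, together with $\PS_1(\PS_{n-1}(H^*)) = \PS_n(H^*)$, yields
\begin{align*}
\PS^1(\bar H) = \bigl(\PS_{n-1}(H^*)/\PS_n(H^*)^+ \PS_{n-1}(H^*)\bigr)^*.
\end{align*}
By Lemma~\ref{ndual} applied to $L \subseteq K_0 := (H^*/\PS_n(H^*))^*$, the same Hopf algebra equals $K_0/L$, so $K_0 = \pi^{-1}(\PS^1(\bar H))$ for the projection $\pi \colon H \to \bar H$. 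Via Remark~\ref{2R}, the identity $\PS^n(H) = K_0$ then reduces to $\pi(\PS^n(H)) = \PS^1(\bar H)$, which further reduces to $\pi(H_{p^{n-1}}) \supseteq \bar H_1$; this last containment is a statement about the compatibility of coradical filtrations with Hopf algebra surjections of connected Hopf algebras.

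The main obstacle is the base case $n = 1$: showing $\PS^1(H) = (H^*/\PS_1(H^*)^+ H^*)^*$. The quotient $B_1 := H^*/\PS_1(H^*)^+ H^*$ is a finite-dimensional commutative local Hopf algebra whose maximal ideal $J_{B_1}$ satisfies $J_{B_1}^{[p]} = 0$, i.e., $B_1$ has ``height one.'' The classical equivalence between such Hopf algebras and duals of restricted enveloping algebras gives $B_1^* = u(P(B_1^*))$, and since the easy containment forces $H_1 \subseteq B_1^*$, one has $P(B_1^*) = P(H)$, whence $B_1^* = u(P(H)) = k\langle P(H)\rangle = k\langle H_1\rangle = \PS^1(H)$, closing the induction.
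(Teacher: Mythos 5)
Your argument is essentially correct but follows a genuinely different route from the paper's. The paper proves $\PS^n(H)=(H^*/\PS_n(H^*))^*$ by a dimension sandwich: Lemma~\ref{3L} gives the containment $\PS^n\subseteq(H^*/\PS_n)^*$ via $J_n\subseteq J^{p^n}$, Lemma~\ref{dim} gives the lower bound $\dim D/\dim C\ge p^{\dim(D_{p^{n-1}}/C_{p^{n-1}})}$, and Lemma~\ref{2L} (resting on Waterhouse's presentation of $H^*$ as a truncated polynomial algebra) shows this lower bound already equals $\dim(H^*/\PS_n)^*/\dim(H^*/\PS_{n-1})^*$, forcing equality. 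You instead prove the containment by a direct pairing computation (showing $H_{p^{n-1}}$ annihilates $\PS_n(H^*)^+H^*$ using $\Delta^{(p^n-1)}(H_{p^{n-1}})\subseteq\sum_{\sum i_j\le p^{n-1}}H_{i_1}\otimes\cdots\otimes H_{i_{p^n}}$ --- this is a clean, self-contained replacement for Lemma~\ref{3L}), and you reduce the induction step to the case $n=1$ by passing to $\bar H=H/\PS^{n-1}(H)$, which you settle by the classical equivalence between height-one finite commutative local Hopf algebras and restricted enveloping algebras. What your approach buys is conceptual transparency (the theorem becomes ``the dual of the largest height-$\le n$ quotient of $H^*$ is the $n$-th term of the upper series,'' bootstrapped from the height-one case); what it costs is reliance on the Demazure--Gabriel-type classification of height-one group schemes, a heavier external input than anything the paper invokes, and one you state without proof or reference.

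Three steps deserve shoring up. First, the final containment $\bar H_1\subseteq\pi(H_1)$ is true but not a formal triviality: it requires that $\{\pi(H_m)\}$ is a coalgebra filtration of $\bar H$ together with the minimality of the coradical filtration among coalgebra filtrations (the paper uses exactly this device, via \cite[Lemma 5.3.4]{montgomery1993hopf}, in the proof of Proposition~\ref{commutativeconnectedHopfalgebra}); you should cite it rather than gesture at ``compatibility.'' Second, the equation $K_0=\pi^{-1}(\PS^1(\bar H))$ is false as written, since the set-theoretic preimage is $K_0+L^+H\supsetneq K_0$; what you actually need, and what your appeal to Remark~\ref{2R} delivers, is the dimension count $\dim K_0=\dim L\cdot\dim\pi(K_0)$ and $\dim\PS^n(H)=\dim L\cdot\dim\pi(\PS^n(H))$, which together with $\PS^n(H)\subseteq K_0$ and equality of images forces equality. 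Third, identifying $K_0/L$ (the Hopf quotient $K_0/L^+K_0$) with the image $\pi(K_0)\subseteq H/L^+H$ uses $L^+H\cap K_0=L^+K_0$, i.e., freeness of $K_0$ over the normal Hopf subalgebra $L$; this holds but should be justified. None of these is a fatal gap, but each is a real step.
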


First, we fix some notations in this section. Let $H$ be a finite-dimensional cocommutative connected Hopf algebra. Therefore $H^*$ is commutative local by Lemma \ref{2duality}(iii). We will write $\PS^n$ for the upper power series of $H$ and $\PS_n$ for the lower power series of $H^*$. Denote $J_n$ as the Jacobson radical of $\PS_n$ for each $n$, where we write $J=J_0$ for the Jacobson radical of $H^*$. Since $\PS_n$ is local by Lemma \ref{ILI}(i), it is clear that $J_n=\PS_n^+$ by Lemma \ref{2duality}(iii) and $H^*/\PS_n=H^*/J_nH^*$ for all $n\ge 0$. 
\begin{lem}\label{1L}
We have $J_n\subseteq J^{p^{n}}$ and $J_n\cap J^{p^{n}+1}=J_n^{2}$. Moreover,
\begin{eqnarray*}
\dim \left(\PS_{n-1}/\PS_n\right)=p^{\dim\left(J_{n-1}/(J^{p^{n-1}+1}\cap J_{n-1})\right)},
\end{eqnarray*}
for any $n\ge 1$.
\end{lem}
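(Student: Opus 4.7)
The plan splits the lemma into three assertions and handles them in order, with the substance concentrated in the second. I begin with $J_n \subseteq J^{p^n}$: because $H^*$ is commutative and $\chara k = p$, the $p^n$-th power map $F_n \colon H^* \to H^*$, $a \mapsto a^{p^n}$, is a ring homomorphism, so $\PS_n(H^*) = F_n(H^*)$, and since $H^*$ is local we have $J_n = \PS_n^+ = F_n(J) = \{y^{p^n} : y \in J\} \subseteq J^{p^n}$. The containment $J_n^2 \subseteq J_n \cap J^{p^n+1}$ is then immediate from $J_n^2 \subseteq J^{2p^n} \subseteq J^{p^n+1}$.

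The reverse inclusion $J_n \cap J^{p^n+1} \subseteq J_n^2$ is the main point, and the main obstacle. I would fix lifts $y_1,\ldots,y_d \in J$ of a basis of $J/J^2$ and write an arbitrary $y \in J$ as $y = \sum c_i y_i + y'$ with $c_i \in k$, $y' \in J^2$. Using commutativity and characteristic $p$, $y^{p^n} = \sum c_i^{p^n} y_i^{p^n} + (y')^{p^n}$ with $(y')^{p^n} \in F_n(J^2) = J_n^2$, so the task reduces to the implication $\sum c_i^{p^n} y_i^{p^n} \in J^{p^n+1} \Rightarrow \sum c_i^{p^n} y_i^{p^n} \in J_n^2$. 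This is where I would invoke the classical structural description: a finite-dimensional commutative local Hopf algebra over an algebraically closed field of characteristic $p$ is, as an algebra, isomorphic to a truncated polynomial ring $k[y_1,\ldots,y_d]/(y_1^{p^{e_1}},\ldots,y_d^{p^{e_d}})$ with $e_i \geq 1$ (this follows from the structure theory of finite commutative infinitesimal group schemes over a perfect field, or can be obtained by applying Borel's theorem to the associated graded Hopf algebra $\gr_J H^*$ and lifting to $H^*$). Under such a presentation, the monomials $y_i^{p^n}$ with $e_i > n$ are linearly independent modulo $J^{p^n+1}$ while $y_i^{p^n} = 0$ for $e_i \leq n$, so the assumption forces all relevant coefficients $c_i^{p^n}$ to vanish.

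For the dimension formula, applying the just-proved equality with $n$ replaced by $n-1$ identifies $J_{n-1}/(J^{p^{n-1}+1} \cap J_{n-1})$ with the cotangent space $J_{n-1}/J_{n-1}^2$, whose dimension in the above presentation is $|\{i : e_i \geq n\}|$. Since $H^*$ is commutative, $\PS_n$ is a normal Hopf subalgebra of $\PS_{n-1}$, so Remark 2.5 gives $\dim(\PS_{n-1}/\PS_n) = \dim\PS_{n-1}/\dim\PS_n$. From $\PS_m \cong \bigotimes_{e_i > m} k[y_i^{p^m}]/\bigl((y_i^{p^m})^{p^{e_i - m}}\bigr)$ a direct telescoping computation shows this ratio equals $p^{|\{i : e_i \geq n\}|}$, matching the right-hand side and completing the proof.
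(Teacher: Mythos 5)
Your proof is correct and follows essentially the same route as the paper's: both rest on the Waterhouse presentation of $H^*$ as a truncated polynomial ring $k[x_1,\ldots,x_d]/(x_1^{p^{e_1}},\ldots,x_d^{p^{e_d}})$, from which $J_n\subseteq J^{p^n}$, the degree argument giving $J_n\cap J^{p^{n}+1}=J_n^{2}$, and the dimension count all follow. Your reduction $y=\sum c_iy_i+y'$ with $(y')^{p^n}\in J_n^2$ and the telescoping computation of $\dim\PS_{n-1}/\dim\PS_{n}$ are just slightly more explicit renderings of the paper's steps.
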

\begin{proof}
According to \cite[Theorem 14.4]{waterhouse1979introduction}, we can write
\begin{eqnarray*}
H^*=k\left[x_1,x_2,\cdots,x_d\right]\Big/\left(x_1^{p^{e_1}},x_2^{p^{e_2}},\cdots,x_d^{p^{e_d}}\right)
\end{eqnarray*}
as an algebra. Therefore each term $\PS_n$ in the lower power series of $H^*$ is generated by $\left\{x_i^{p^n}\big|1\le i\le d\right\}$ and
\begin{eqnarray*}
J_n=\left(x_1^{p^n},x_2^{p^n},\cdots,x_d^{p^n}\right).
\end{eqnarray*}
It follows that $J_n\subseteq J^{p^n}$. Hence $J_n^2=J_nJ_n\subseteq J^{p^n}J=J^{p^n+1}$ and $J_n^2\subseteq J_n\cap J^{p^{n}+1}$. Then in order to show $J_n\cap J^{p^{n}+1}=J_n^{2}$, it suffices to prove that $\left(J_n\cap J^{p^{n}+1}\right)/J_n^2=0$ . It is clear that any element in $\left(J_n\cap J^{p^{n}+1}\right)/J_n^2$ can be represented by $\sum \lambda_ix_i^{p^n}$ for $\lambda_i\in k$. Without loss of generality, we can assume that all $x_i^{p^n}\neq 0$. Observe that any term $\sum \alpha_ix_i^{d_i}\in J^{p^n+1}$ must have $d_i\ge p^n+1$ or $\alpha_i=0$. This implies that all $\lambda_i=0$, which completes the proof. 

By the description of each $\PS_{n}$, there exists some integer $l$ such that
\begin{eqnarray*}
\PS_{n-1}/\PS_{n}\cong k\left[y_1,\cdots,y_l\right]\big/\left(y_1^p,\cdots,y_l^p\right)
\end{eqnarray*} 
as algebras. Moreover, $l=\dim J_{n-1}/J_{n-1}^2$. Therefore 
\begin{eqnarray*}
\dim\left(\PS_{n-1}/\PS_{n}\right)=p^l=p^{\dim\left(J_{n-1}/J_{n-1}^2\right)}=p^{\dim\left(J_{n-1}/(J^{p^{n-1}+1}\cap J_{n-1})\right)}.
\end{eqnarray*}
\end{proof}

\begin{lem}\label{2L}
We have for each $n\ge 1$
\begin{eqnarray*}
p^{\dim\left((J_{n-1}H^*+J^{p^{n-1}+1})/J^{p^{n-1}+1}\right)}=\dim (H^*/\PS_{n})^*/\dim(H^*/\PS_{n-1})^*.
\end{eqnarray*}
\end{lem}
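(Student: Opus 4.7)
The plan is to reduce both sides of the claimed equality to the common quantity $p^{\dim\left(J_{n-1}/(J_{n-1}\cap J^{p^{n-1}+1})\right)}$, with Lemma \ref{1L} handling the right-hand side and an elementary commutative algebra identity handling the left-hand side.

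First I would unwind the right-hand side. Since $\PS_{n-1}$ and $\PS_n$ are normal Hopf subalgebras of $H^*$ (Proposition \ref{nseries}(i)), Remark \ref{2R} gives $\dim(H^*/\PS_m)=\dim H^*/\dim \PS_m$ for $m=n-1,n$. Taking linear duals preserves dimension, so
\begin{equation*}
\frac{\dim (H^*/\PS_{n})^*}{\dim (H^*/\PS_{n-1})^*}=\frac{\dim \PS_{n-1}}{\dim \PS_{n}}=\dim(\PS_{n-1}/\PS_{n}).
\end{equation*}
By Lemma \ref{1L}, this equals $p^{\dim\left(J_{n-1}/(J^{p^{n-1}+1}\cap J_{n-1})\right)}$.

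Next I would analyze the left-hand side. The key input is the inclusion $J_{n-1}\subseteq J^{p^{n-1}}$ from Lemma \ref{1L}, which forces $J_{n-1}\cdot J\subseteq J^{p^{n-1}+1}$. Decomposing $H^*=k\oplus J$, one obtains
\begin{equation*}
J_{n-1}H^{*}=J_{n-1}+J_{n-1}J\subseteq J_{n-1}+J^{p^{n-1}+1},
\end{equation*}
so modulo $J^{p^{n-1}+1}$ the image of $J_{n-1}H^*$ coincides with the image of $J_{n-1}$. A standard second isomorphism theorem calculation then yields
\begin{equation*}
(J_{n-1}H^*+J^{p^{n-1}+1})/J^{p^{n-1}+1}\;\cong\;(J_{n-1}+J^{p^{n-1}+1})/J^{p^{n-1}+1}\;\cong\;J_{n-1}/(J_{n-1}\cap J^{p^{n-1}+1}).
\end{equation*}
Raising $p$ to the dimension of this space matches the expression obtained for the right-hand side, completing the proof.

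The only subtle point is the passage from $J_{n-1}H^{*}$ to $J_{n-1}$ modulo $J^{p^{n-1}+1}$, which is where Lemma \ref{1L}'s inclusion $J_{n-1}\subseteq J^{p^{n-1}}$ is essential; without it, one could not absorb the ``$H^{*}$'' factor. Everything else is bookkeeping with the normality of the $\PS_n$ and the dimension formula for quotients by normal Hopf subalgebras.
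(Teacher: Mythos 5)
Your proof is correct and follows essentially the same route as the paper's: both sides are reduced to $p^{\dim\left(J_{n-1}/(J_{n-1}\cap J^{p^{n-1}+1})\right)}$, using Lemma \ref{1L} both for the inclusion $J_{n-1}J\subseteq J^{p^{n-1}+1}$ that lets $H^*$ be absorbed on the left-hand side and for the dimension formula for $\PS_{n-1}/\PS_{n}$ on the right-hand side. The only divergence is in the bookkeeping for the right-hand side: the paper identifies the quotient $(H^*/\PS_{n})^*/(H^*/\PS_{n-1})^*$ with $(\PS_{n-1}/\PS_{n})^*$ via Lemma \ref{ndual} and then applies Remark \ref{2R} once, whereas you apply Remark \ref{2R} directly to $\dim(H^*/\PS_{m})=\dim H^*/\dim\PS_{m}$ and again to $\dim(\PS_{n-1}/\PS_{n})=\dim\PS_{n-1}/\dim\PS_{n}$; this bypasses Lemma \ref{ndual} entirely and needs only the normality of the $\PS_{m}$, which is automatic from the commutativity of $H^*$, so it is a legitimate (and slightly more elementary) shortcut.
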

\begin{proof}
We know $\PS_{n}\subseteq \PS_{n-1}\subseteq H^*$ are normal Hopf subalgebras of $H^*$ by Proposition \ref{nseries}(i). Applying the isomorphism in Lemma \ref{ndual}, we have 
\begin{eqnarray*}
\dim \left((H^*/\PS_{n})^*/(H^*/\PS_{n-1})^*\right)=\dim\left(\PS_{n-1}/\PS_{n}\right).
\end{eqnarray*}
Moreover, $\left(H^*/\PS_{n-1}\right)^*$ is normal in $H$ hence in $\left(H^*/\PS_{n}\right)^*$ by Remark \ref{normal}. Then by Remark \ref{2R}, we have 
\begin{eqnarray*}
\dim (H^*/\PS_{n})^*/\dim(H^*/\PS_{n-1})^*=\dim \left((H^*/\PS_{n})^*/(H^*/\PS_{n-1})^*\right)=\dim\left(\PS_{n-1}/\PS_{n}\right)
\end{eqnarray*}
According to Lemma \ref{1L}, $J_{n-1}J\subseteq J^{p^{n-1}}J=J^{p^{n-1}+1}$. Hence the natural map from $J_{n-1}$ to $(J_{n-1}H^*+J^{p^{n-1}+1})/J^{p^{n-1}+1}$ is surjective, which has kernel $J_{n-1}\cap J^{p^{n-1}+1}$. Then the result follows by Lemma \ref{1L}.
\end{proof}

\begin{lem}\label{3L}
We have $H_{p^n-1}\subseteq \left(H^*/\PS_n\right)^*$. Moreover, $\PS^n\subseteq  \left(H^*/\PS_n\right)^*$ for all $n\ge 0$. 
\end{lem}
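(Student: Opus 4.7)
The plan is to convert both claimed inclusions into statements about the dual $H^*$ via Lemma \ref{2duality}(ii), and then to read them off from the containment $J_n\subseteq J^{p^n}$ already established in Lemma \ref{1L}.

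First I would unpack $(H^*/\PS_n)^*$. Since $\PS_n$ is itself commutative and local (apply Lemma \ref{ILI}(i) to the local Hopf algebra $H^*$), its augmentation ideal coincides with its Jacobson radical $J_n$, so under the standing convention $H^*/\PS_n=H^*/J_nH^*$. On the other side, Lemma \ref{2duality}(ii) identifies $H_{p^n-1}$ with $(H^*/J^{p^n})^*$. Because $J_n\subseteq J^{p^n}$ by Lemma \ref{1L}, we obtain $J_nH^*\subseteq J^{p^n}$, and hence a Hopf algebra surjection $H^*/J_nH^*\twoheadrightarrow H^*/J^{p^n}$. Taking duals inside $H=H^{**}$ yields the first inclusion $H_{p^n-1}\subseteq (H^*/\PS_n)^*$.

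For the second inclusion I would exploit the numerical estimate $p^{n-1}\le p^n-1$ for all $n\ge 1$, so that monotonicity of the coradical filtration gives $H_{p^{n-1}}\subseteq H_{p^n-1}$; combined with the first part, $H_{p^{n-1}}\subseteq(H^*/\PS_n)^*$. Since $(H^*/\PS_n)^*$ is a (normal) Hopf subalgebra of $H$ by Remark \ref{normal}, in particular a subalgebra, it must contain the subalgebra $\PS^n=k\langle H_{p^{n-1}}\rangle$ generated by $H_{p^{n-1}}$. The case $n=0$ is immediate: $\PS_0=H^*$ forces $(H^*/\PS_0)^*=k=\PS^0$, and $H_{p^0-1}=H_0=k$ by connectedness.

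I do not foresee a real obstacle. The only subtle point worth flagging is the bookkeeping in the first step: one must recognize that "$H^*/\PS_n$" in the statement really denotes $H^*/\PS_n^+H^*=H^*/J_nH^*$ under the convention, which is precisely what makes the estimate $J_n\subseteq J^{p^n}$ from Lemma \ref{1L} directly applicable. Everything else is a formal consequence of dualizing a quotient into a Hopf subalgebra of $H$.
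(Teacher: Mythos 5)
Your proposal is correct and follows essentially the same route as the paper: both derive the surjection $H^*/J_nH^*\twoheadrightarrow H^*/J^{p^n}$ from $J_n\subseteq J^{p^n}$ (Lemma \ref{1L}), dualize it using Lemma \ref{2duality}(ii) to get $H_{p^n-1}\subseteq(H^*/\PS_n)^*$, and then use $p^{n-1}\le p^n-1$ together with the fact that $(H^*/\PS_n)^*$ is a subalgebra to conclude $\PS^n\subseteq(H^*/\PS_n)^*$. The only differences are cosmetic: you spell out the convention $H^*/\PS_n=H^*/J_nH^*$ and the $n=0$ base case, which the paper leaves implicit.
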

\begin{proof}
There is a natural surjection $H^*/\PS_n=H^*/(J_nH^*)\twoheadrightarrow H^*/J^{p^{n}}$ since  $J_n\subseteq J^{p^n}$ by Lemma \ref{1L}. Notice that the dual of the last term is $H_{p^{n}-1}$ in $H^{**}=H$ by Lemma \ref{2duality}(ii). Therefore we have $H_{p^n-1}\subseteq \left(H^*/\PS_n\right)^*$ by taking the dual of the above surjection. Because $p^{n-1}\le p^n-1$, $\PS^n(H)=k\langle H_{p^{n-1}}\rangle\subseteq k\langle H_{p^{n}-1}\rangle\subseteq \left(H^*/\PS_n\right)^*$.
\end{proof}

\noindent
\bf{Proof of Theorem \ref{MDT}}\rm.  First of all, we prove  $\PS^n=(H^*/\PS_n)^*$ by induction on $n$. When $n=0$, both sides are the base field $k$. Next, assume that the statement is true for $n-1$. Write $C=\PS^{n-1}$ and $D=\PS^n$, where $C=(H^*/\PS_{n-1})^*$ by induction. Choose any integer $m\le p^{n-1}-1$. By Lemma \ref{3L}, $H_m\subseteq C=(H^*/\PS_{n-1})^*$. Moreover by Lemma \ref{2duality}(iv), $H_m\subseteq C_m=C\cap H_m\subseteq D_m\subseteq H_m$, which implies that $C_m=D_m=H_m$. Then we have
\begin{eqnarray*}
\dim D/\dim C\ge p^{\dim\left(D_{p^{n-1}}/C_{p^{n-1}}\right)}
\end{eqnarray*}
by Lemma \ref{dim}. According to Lemma \ref{2duality}(ii), a simple calculation yields that
\begin{align*}
\dim C_{p^{n-1}}=\dim \frac{H^*/(J_{n-1}H^*)}{\left(J/J_{n-1}H^*\right)^{p^{n-1}+1}}=\dim \frac{H^*}{J_{n-1}H^*+J^{p^{n-1}+1}}.
\end{align*}
Since $D$ is generated by $H_{p^{n-1}}$, it is clear that $D_{p^{n-1}}=H_{p^{n-1}}$. Thus
\begin{eqnarray*}
\dim\frac{D_{p^{n-1}}}{C_{p^{n-1}}}&=&\dim\frac{H_{p^{n-1}}}{C_{p^{n-1}}}=\dim\frac{H^*/J^{p^{n-1}+1}}{H^*/(J_{n-1}H^*+J^{p^{n-1}+1})}=\dim\frac{J_{n-1}H^*+J^{p^{n-1}+1}}{J^{p^{n-1}+1}}.
\end{eqnarray*}
Therefore by Lemma \ref{2L} and Lemma \ref{3L} , we have
\begin{eqnarray*}
\dim (H^*/\PS_n)^*\big/\dim (H^*/\PS_{n-1})^*&\ge&\dim\PS^n/\dim \PS^{n-1}=\dim D/\dim C\\&\ge& p^{\dim\left(D_{p^{n-1}}/C_{p^{n-1}}\right)}
=\dim (H^*/\PS_n)^*/\dim (H^*/\PS_{n-1})^*.
\end{eqnarray*}
Hence $\dim \PS^n=\dim (H^*/\PS_n)^*$. By Lemma \ref{3L}, we know $\PS^n\subseteq(H^*/\PS_n)^*$. So $\PS^n=(H^*/\PS_n)^*$. The other statement is checked as below. By Lemma \ref{ndual}, 
\begin{eqnarray*}
\left(H/\PS^n\right)^*=\left(H^{**}/(H^*/\PS_n)^*\right)^*=(\PS_n)^{**}=\PS_n.
\end{eqnarray*}
\begin{cor}\label{factor}
For the factors, we have 
\begin{eqnarray*}
\PS^n/\PS^m=(\PS_m/\PS_n)^*
\end{eqnarray*}
for any $n\ge m$.
\end{cor}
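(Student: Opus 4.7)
The plan is to read off the corollary as a direct consequence of Theorem~\ref{MDT} combined with the quotient-of-quotients identity recorded in Lemma~\ref{ndual}. Concretely, I would apply Lemma~\ref{ndual} inside the commutative Hopf algebra $H^*$ to the nested pair $\PS_n \subseteq \PS_m$ and then dualize back to $H$ using the identification $\PS^k = (H^*/\PS_k)^*$ provided by Theorem~\ref{MDT}.

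The first step is to check that Lemma~\ref{ndual} applies. Since $H$ is cocommutative, $H^*$ is commutative, so Proposition~\ref{nseries}(i) tells us that the lower power series of $H^*$ is a descending chain of normal Hopf subalgebras. For $n \ge m$ we therefore have an inclusion $\PS_n \subseteq \PS_m$ of normal Hopf subalgebras of $H^*$. Taking $H^*$ as the ambient Hopf algebra, $K=\PS_n$, and $L=\PS_m$ in Lemma~\ref{ndual} gives
\[
(H^*/\PS_n)^*\big/(H^*/\PS_m)^* \;=\; (\PS_m/\PS_n)^*.
\]

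The second step is to rewrite the left-hand side via Theorem~\ref{MDT}: substituting $(H^*/\PS_n)^* = \PS^n$ and $(H^*/\PS_m)^* = \PS^m$ yields exactly $\PS^n/\PS^m = (\PS_m/\PS_n)^*$, which is the claim. The only point needing a moment of care is that the quotient $\PS^n/\PS^m$ is well-defined, i.e.\ that $\PS^m$ is normal in $\PS^n$. By Remark~\ref{normal}, however, $\PS^m = (H^*/\PS_m)^*$ is already normal in the ambient Hopf algebra $H$, and normality is inherited by any intermediate Hopf subalgebra, in particular by $\PS^n$.

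I do not anticipate a genuine obstacle here: once the duality in Theorem~\ref{MDT} has been established, this corollary is a purely formal bookkeeping statement obtained by applying the standard quotient identity Lemma~\ref{ndual} on the commutative side and then transporting everything back through duality.
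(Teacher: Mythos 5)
Your proposal is correct and follows essentially the same route as the paper: the paper's proof likewise applies Lemma~\ref{ndual} to the nested normal Hopf subalgebras $\PS_n\subseteq\PS_m$ of $H^*$ and then identifies $(H^*/\PS_k)^*$ with $\PS^k$ via Theorem~\ref{MDT}. Your additional checks (normality of the chain via Proposition~\ref{nseries}(i) and well-definedness of $\PS^n/\PS^m$ via Remark~\ref{normal}) are details the paper leaves implicit, and they are accurate.
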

\begin{proof}
By Lemma \ref{ndual}, whenever $n\ge m$, we have
\begin{eqnarray*}
\PS^n/\PS^m=(H/\PS_n)^*/(H/\PS_m)^*=(\PS_m/\PS_n)^*.
\end{eqnarray*}
\end{proof}
\begin{cor}\label{CMDT}
The upper power series of $H$ is a sequence of normal Hopf subalgebras. 
\end{cor}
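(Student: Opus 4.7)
The plan is to obtain this corollary by dualizing the already-established commutative case (Proposition \ref{nseries}(i)) through the duality Theorem \ref{MDT}. Since $H$ is cocommutative and finite dimensional, $H^*$ is commutative, so Proposition \ref{nseries}(i) applies to $H^*$ and tells me that each term $\PS_n(H^*)$ of the lower power series of $H^*$ is a normal Hopf subalgebra. Remark \ref{normal} then records that the associated Hopf ideal $\PS_n(H^*)^+ H^*$ of $H^*$ is a normal Hopf ideal, and Remark \ref{1R}(b), applied to the finite-dimensional Hopf algebra $H^*$, shows that its dual $(H^*/\PS_n(H^*)^+ H^*)^*$ is a normal Hopf subalgebra of $H^{**} = H$.

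Under the standing convention $H^*/\PS_n(H^*) = H^*/\PS_n(H^*)^+ H^*$, this is exactly $(H^*/\PS_n(H^*))^*$, which Theorem \ref{MDT} identifies with $\PS^n(H)$. Thus each $\PS^n(H)$ is normal in $H$, and the whole ascending chain $\PS^0(H) \subseteq \PS^1(H) \subseteq \cdots$ consists of normal Hopf subalgebras. I do not anticipate any real obstacle: the heavy lifting is already in Proposition \ref{nseries}(i), Remark \ref{normal}, and Theorem \ref{MDT}, and the corollary is essentially just a translation of the commutative statement through the duality. The only point to be careful about is the direction of normality, ensuring that Remark \ref{1R}(b) is invoked with $H^*$ playing the role of the ambient Hopf algebra and that Theorem \ref{MDT} is used in the direction $\PS^n(H) = (H^*/\PS_n(H^*))^*$ rather than its companion identity.
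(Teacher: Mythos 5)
Your proposal is correct and is exactly the paper's argument: the paper's one-line proof "It follows from Theorem \ref{MDT} and Remark \ref{normal}" unpacks to precisely the chain you describe, namely normality of $\PS_n(H^*)$ from Proposition \ref{nseries}(i), dualization via Remarks \ref{normal} and \ref{1R}(b), and the identification $\PS^n(H)=(H^*/\PS_n(H^*))^*$ from Theorem \ref{MDT}. No gaps.
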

\begin{proof}
It follows from Theorem \ref{MDT} and Remark \ref{normal}.
\end{proof}

\section{Finite-Dimensional Cocommutative Connected Hopf Algebras}\label{connected}
We still assume $k$ to be algebraically closed of characteristic $p>0$. Recall that a \emph{coalgebra filtration} of a coalgebra $C$ is any set of subspaces  $\{A_n\}_{n\ge 0}$ satisfying the two conditions below: (i) $A_{n}\subseteq A_{n+1},\ C=\cup_{n\ge 0} A_n$ (ii) $\Delta A_n\subseteq \sum_{i=0}^{n} A_{i}\otimes A_{n-i}$. We state the following two dualized properties:
\begin{prop}\label{cocommutativeconnectedHopfalgebra}
Let $H$ be a finite-dimensional cocommutative connected Hopf algebra with upper power series $\{\PS^n(H)\}$. Then the following are equivalent:
\begin{itemize}
\item[(i)] $\PS^1(H)$ is local.
\item[(ii)] $\PS^n(H)/\PS^{n-1}(H)$ is local for all $n\ge 1$.
\item[(iii)] $H$ is local.
\end{itemize}
\end{prop}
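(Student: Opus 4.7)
The plan is to dispatch the easy directions quickly, then attack (i)$\Rightarrow$(iii) by induction on the upper power series. For (iii)$\Rightarrow$(i), I apply Lemma~\ref{ILI}(i) to the subalgebra $\PS^1(H)\subseteq H$. For (iii)$\Rightarrow$(ii), each $\PS^n(H)$ is a subalgebra of $H$ and hence local by the same Lemma; the augmentation ideal of the quotient Hopf algebra $\PS^n/\PS^{n-1}$ is the image of the nilpotent $(\PS^n)^+$ and so is nilpotent, and Lemma~\ref{2duality}(iii) gives locality. For (ii)$\Rightarrow$(i), note $\PS^0(H)=k$ has zero augmentation ideal, so $\PS^1/\PS^0=\PS^1$ and the $n=1$ case of (ii) is (i).

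For the main implication (i)$\Rightarrow$(iii), I will show by induction on $n$ that $\PS^n(H)$ is local. Since the upper power series is an ascending chain of Hopf subalgebras exhausting the finite-dimensional $H$, it stabilizes at $H$, and (iii) follows. The base case $n=1$ is exactly (i). The inductive step uses the following general fact: for finite-dimensional Hopf algebras $B\subseteq C$ with $B$ normal in $C$, if both $B$ and $C/B$ are local, then so is $C$. Indeed $C^+/B^+C\cong(C/B)^+$ is nilpotent, while $B^+C=CB^+$ by Remark~\ref{1R}(a) satisfies $(B^+C)^m=(B^+)^mC^m=0$ for large $m$; combining these gives nilpotence of $C^+$, and Lemma~\ref{2duality}(iii) concludes. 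Applied with $B=\PS^{n-1}$ and $C=\PS^n$, using normality from Corollary~\ref{CMDT}, the inductive step reduces to proving $\PS^n/\PS^{n-1}$ is local for every $n\geq 1$.

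To prove this I dualize. Set $A:=H^*$, which is commutative and local by Lemma~\ref{2duality}(iii). Corollary~\ref{factor} identifies $\PS^n(H)/\PS^{n-1}(H)\cong(\PS_{n-1}(A)/\PS_n(A))^*$, so it suffices to show $\PS_{n-1}(A)/\PS_n(A)$ is connected; by Theorem~\ref{MDT}, hypothesis (i) rephrases as $A/\PS_1(A)\cong\PS^1(H)^*$ being connected. The $p^{n-1}$-power map $F^{n-1}\colon a\mapsto a^{p^{n-1}}$ on the commutative algebra $A$ is an algebra endomorphism with image $\PS_{n-1}(A)$, and it preserves the coproduct by the freshman's dream in the commutative ring $A\otimes A$. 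A quick check shows it carries $\PS_1(A)^+A$ into $\PS_n(A)^+\PS_{n-1}(A)$, so it descends to a surjection $\bar F^{n-1}\colon A/\PS_1(A)\twoheadrightarrow\PS_{n-1}(A)/\PS_n(A)$. Dualizing gives an embedding of algebras $(\PS_{n-1}(A)/\PS_n(A))^*\hookrightarrow (A/\PS_1(A))^*=\PS^1(H)$; the target is local by (i), so the subalgebra is local by Lemma~\ref{ILI}(i), and $\PS_{n-1}(A)/\PS_n(A)$ is connected.

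The main technical hurdle I anticipate is that $F^{n-1}$ is only $p$-semilinear over $k$, not $k$-linear. Since the paper assumes $k$ algebraically closed (hence perfect), this can be handled by a Frobenius twist that makes $F^{n-1}$ into a genuine $k$-linear Hopf algebra surjection; alternatively, the final embedding of duals only requires multiplicative closure of the image, which any ring homomorphism provides, so Lemma~\ref{ILI}(i) still applies to transfer locality.
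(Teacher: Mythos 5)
Your proposal is correct and follows essentially the same route as the paper: the crux in both is to dualize via Theorem~\ref{MDT} and Corollary~\ref{factor} and then use the $p^{n-1}$-power Frobenius map on the commutative local algebra $H^*$ to push connectedness of $H^*/\PS_1(H^*)$ onto each quotient $\PS_{n-1}(H^*)/\PS_n(H^*)$, which dualizes back to locality of $\PS^n(H)/\PS^{n-1}(H)$, after which normality (Corollary~\ref{CMDT}) and nilpotence of augmentation ideals assemble the pieces. The only differences are cosmetic: where you handle the $p$-semilinearity of the power map by a Frobenius twist and then dualize to an algebra embedding feeding into Lemma~\ref{ILI}(i), the paper stays on the coalgebra side and applies the coalgebra-filtration lemma \cite[Lemma 5.3.4]{montgomery1993hopf} to the images $f(C_n)$, and where you induct with a ``local-by-local extension is local'' step, the paper telescopes the nilpotence of $(H^+)^s$ directly; both variants are sound.
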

\begin{prop}\label{commutativeconnectedHopfalgebra}
Let $K=H^*$ be a finite-dimensional commutative local Hopf algebra with lower power series $\{\PS_n(K)\}$. Then the following are equivalent:
\begin{itemize}
\item[(i)] $K/\PS_1(K)$ is connected.
\item[(ii)] $\PS_{n-1}(K)/\PS_{n}(K)$ is connected for all $n\ge 1$.
\item[(iii)] $K$ is connected.
\end{itemize}
\end{prop}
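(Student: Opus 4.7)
The plan is to reduce the proposition to Proposition \ref{cocommutativeconnectedHopfalgebra} via the duality between upper and lower power series. Set $H = K^*$. Since $K$ is a finite-dimensional commutative local Hopf algebra, $H$ is finite-dimensional cocommutative, and by Lemma \ref{2duality}(iii) it is connected. Therefore Theorem \ref{MDT} applies and yields the identification
\[
\PS^n(H) = (K/\PS_n(K))^*
\]
for every $n \ge 0$; in particular each $\PS_n(K) = (H/\PS^n(H))^*$ is a normal Hopf subalgebra of $K$ by Remark \ref{normal}, so $K/\PS_1(K)$ and the successive quotients $\PS_{n-1}(K)/\PS_n(K)$ are honest quotient Hopf algebras.

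Next, I would invoke Corollary \ref{factor} to identify the consecutive factors of the two series:
\[
\PS^n(H)/\PS^{n-1}(H) = \bigl(\PS_{n-1}(K)/\PS_n(K)\bigr)^*, \qquad n \ge 1.
\]
Combined with Lemma \ref{2duality}(iii), which says that a finite-dimensional Hopf algebra is connected if and only if its dual is local, the three conditions of the present proposition translate verbatim into the three conditions of Proposition \ref{cocommutativeconnectedHopfalgebra} applied to $H$: condition (i) becomes ``$\PS^1(H)$ is local,'' condition (ii) becomes ``$\PS^n(H)/\PS^{n-1}(H)$ is local for all $n \ge 1$,'' and condition (iii) becomes ``$H$ is local.''

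Since Proposition \ref{cocommutativeconnectedHopfalgebra} has already established the equivalence of these three conditions for the finite-dimensional cocommutative connected Hopf algebra $H$, the equivalence transfers back to $K$ and the proposition follows. I do not foresee a real obstacle here: the substantive work is entirely carried by Theorem \ref{MDT} and Corollary \ref{factor}, and the remaining task is only to bookkeep the duals carefully and to confirm that every quotient in sight is a finite-dimensional Hopf algebra on which Lemma \ref{2duality}(iii) can be applied.
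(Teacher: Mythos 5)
Your reduction is correct and is essentially the paper's own argument: the paper proves Propositions \ref{cocommutativeconnectedHopfalgebra} and \ref{commutativeconnectedHopfalgebra} jointly, opening with exactly your observation that Corollary \ref{factor} and Lemma \ref{2duality}(iii) make the corresponding statements (i), (ii), (iii) of the two propositions pairwise equivalent. The only organizational difference is that the paper then distributes the three implications across the two dual sides (proving (i)$\Rightarrow$(ii) directly on the commutative side and (ii)$\Rightarrow$(iii), (iii)$\Rightarrow$(i) on the cocommutative side) rather than treating Proposition \ref{cocommutativeconnectedHopfalgebra} as a finished black box; since that joint proof does establish Proposition \ref{cocommutativeconnectedHopfalgebra} in full, your appeal to it is not circular.
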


\noindent
\bf{Proof of Propositions \ref{cocommutativeconnectedHopfalgebra} and \ref{commutativeconnectedHopfalgebra}}\rm. By Corollary \ref{factor} and Lemma \ref{2duality}(iii), statements (i), (ii), (iii) of each proposition are equivalent, respectively. Hence we can prove them together. The strategy is to show $(\text{i})\Rightarrow(\text{ii})$ in Proposition \ref{commutativeconnectedHopfalgebra} and $(\text{ii})\Rightarrow(\text{iii})$ with $(\text{iii})\Rightarrow(\text{i})$ in Proposition \ref{cocommutativeconnectedHopfalgebra}.

$(\text{i})\Rightarrow (\text{ii})$ in Proposition \ref{commutativeconnectedHopfalgebra}. Write $C=K/\PS_1(K)$ and $D=\PS_{n-1}(K)/\PS_{n}(K)$ for any $n\ge 1$. Define map $f: K\to \PS_{n-1}(K)$ as $f(x)=x^{p^{n-1}}$ for all $x\in K$. It is clear that $f$ is surjective and it induces a surjection from $C$ to $D$, which we still denote as $f$. Notice that $f$ is not $k$-linear, but it is easy to show that that $\{f(C_n)\}_{n\ge 0}$ is a coalgebra filtration of $D$. Because $C_n$ is the coradical filtration of $C$, by  \cite[Theorem 5.2.2]{montgomery1993hopf}, $\{C_n\}_{n\ge 0}$ exhausts $C$. Hence $D=\bigcup_{n\ge 0}f(C_n)$. Moreover since $K$ is commutative:
\begin{eqnarray*}
\Delta\left(f(C_n)\right)&=&\left(\Delta(C_n)\right)^{p^{n-1}}\subseteq\left(\sum_{i=0}^{n}C_{i}\otimes C_{n-i}\right)^{p^{n-1}}\subseteq\sum_{i=0}^{n}C_{i}^{p^{n-1}}\otimes C_{n-i}^{p^{n-1}}\\ 
&&\subseteq\sum_{i=0}^{n}f\left(C_{i}\right)\otimes f\left(C_{n-i}\right)
\end{eqnarray*}
for all $n\ge 0$. By \cite[Lemma 5.3.4]{montgomery1993hopf}, $D_0\subseteq f(C_0)$ and the result follows. 

$(\text{ii})\Rightarrow(\text{iii})$ in Proposition \ref{cocommutativeconnectedHopfalgebra}. Because $H$ is finite-dimensional, there exists some integer $d$ such that $\PS^d(H)=H$. Since each factor $\PS^{n}(H)/\PS^{n-1}(H)$ is local for all $1\le n\le d$, there are integers $s_{n}$ such that $\left(\PS^{n}(H)^{+}\right)^{s_{n}}\subseteq \PS^{n}(H)\PS^{n-1}(H)^+$ for all $1\le n\le d$ by Lemma \ref{2duality}(iii). By Corollary \ref{CMDT}, $\PS^n(H)$ is normal in $\PS^{n+1}(H)$ for any $n\ge 0$. Hence by Remark \ref{1R}(a), we have
\begin{align*}
\left(\PS^{n+1}(H)\PS^n(H)^+\right)^t=\PS^{n+1}(H)^t\left(\PS^n(H)^+\right)^t\subseteq H\left(\PS^{n}(H)^+\right)^t
\end{align*}
for all $t\ge 0$. Denote $s=\prod s_n$, then 
\begin{align*}
&(H^+)^s=(\PS^d(H)^{+})^{s}=\left\{\left(\PS^d(H)^+\right)^{s_d}\right\}^{\frac{s}{s_d}}\subseteq \left(\PS^d(H)\PS^{d-1}(H)^+\right)^{\frac{s}{s_d}}\subseteq H\left(\PS^{d-1}(H)^+\right)^{\frac{s}{s_d}}\\
&=H\left\{\left(\PS^{d-1}(H)^+\right)^{s_{d-1}}\right\}^{\frac{s}{s_ds_{d-1}}}\subseteq H\left(\PS^{d-1}(H)\PS^{d-2}(H)^+\right)^{\frac{s}{s_ds_{d-1}}}\subseteq H\left(\PS^{d-2}(H)^+\right)^{\frac{s}{s_ds_{d-1}}}\\
&\subseteq\cdots\subseteq H\left(\PS^1(H)^+\right)^{s_1}=0.
\end{align*}
Thus $H$ is local by Lemma \ref{2duality}(iii). 

$(\text{iii})\Rightarrow (\text{i})$ in Proposition \ref{cocommutativeconnectedHopfalgebra}. This holds by Lemma \ref{ILI}(i).

\section{Pointed Cocommutative Hopf Algebras}\label{PCHA}
Let $H$ be any Hopf algebra, and $G$ denote the group of group-like elements of $H$. Recall \cite[Definition 5.6.1]{montgomery1993hopf} that any \emph{irreducible component} of $H$ is a maximal subcoalgebra, where any two of its nonzero subcoalgebras have nonzero intersection.  We use $H_g$ to denote the irreducible component  of $H$ containing $g$ for every $g\in G$.

\begin{prop} \cite[Corollary 5.6.4]{montgomery1993hopf}\label{MST}
\begin{itemize}
\item[(i)] $H_xH_y=H_{xy}$ and $SH_{x}\subseteq H_{x^{-1}}$ for all $x,y\in G$. 
\item[(ii)] $H_e$ is a Hopf subalgebra of $H$, where $e$ is the identity of $G$.
\item[(iii)] If $H$ is pointed cocommutative, then $H_e\#kG\cong H$ via $h\#x\mapsto hx$.
\end{itemize}
\end{prop}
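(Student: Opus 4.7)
The plan is to prove the three items sequentially, relying on two structural inputs: (a) the multiplication $m\colon H\otimes H\to H$ is a coalgebra homomorphism, so the product $CD$ of two subcoalgebras of $H$ is again a subcoalgebra; and (b) the image of an irreducible coalgebra under a coalgebra map is irreducible, while the tensor product of two irreducible coalgebras is irreducible (the latter holds for pointed irreducible coalgebras, which is the situation here since each $H_g$ has unique simple subcoalgebra $kg$).

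For (i), the first step is to establish $H_xH_y\subseteq H_{xy}$. Since $H_x\otimes H_y$ is an irreducible coalgebra, its image $H_xH_y=m(H_x\otimes H_y)$ is an irreducible subcoalgebra of $H$ containing the group-like $xy$, and maximality of $H_{xy}$ gives the inclusion. For the reverse direction, note $1=x\cdot x^{-1}\in H_xH_{x^{-1}}$ because $x\in H_x$ and $x^{-1}\in H_{x^{-1}}$; applying the forward inclusion to $H_{x^{-1}}H_{xy}\subseteq H_y$ then produces
\begin{equation*}
H_{xy}\ =\ 1\cdot H_{xy}\ \subseteq\ H_xH_{x^{-1}}H_{xy}\ \subseteq\ H_xH_y.
\end{equation*}
For the antipode claim, $S\colon H\to H^{\mathrm{cop}}$ is a coalgebra map, so $SH_x$ is an irreducible subcoalgebra of $H$ containing $Sx=x^{-1}$, hence $SH_x\subseteq H_{x^{-1}}$.

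Item (ii) is then an immediate corollary: $H_eH_e=H_e$ and $1\in H_e$ make $H_e$ a unital subalgebra; $H_e$ is a subcoalgebra by definition; and $SH_e\subseteq H_{e^{-1}}=H_e$ closes it under the antipode. For (iii), I would invoke the decomposition $H=\bigoplus_{g\in G}H_g$ available for pointed cocommutative $H$. By (i), right multiplication by $g$ is a linear bijection $H_e\to H_g$ with inverse right multiplication by $g^{-1}$ (since $H_g\cdot g^{-1}\subseteq H_gH_{g^{-1}}\subseteq H_e$), giving $H=\bigoplus_g H_e g$ and hence that $\phi(h\# g):=hg$ is a linear isomorphism. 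Conjugation $g\cdot h:=ghg^{-1}$ lies in $H_gH_eH_{g^{-1}}\subseteq H_e$, so $kG$ acts on $H_e$ by Hopf algebra automorphisms and the smash product $H_e\# kG$ is a Hopf algebra with tensor coproduct on each $H_e\otimes kg$. The remaining checks, that $\phi$ respects multiplication via $gh'=(gh'g^{-1})\cdot g$ and respects comultiplication because each $g$ is group-like, are routine.

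The main obstacle is securing the two structural inputs used freely above: the irreducibility of $H_x\otimes H_y$ and the direct-sum decomposition $H=\bigoplus_g H_g$ for pointed cocommutative $H$. Both are standard in the pointed-coalgebra theory of Montgomery, but without them the argument collapses; everything else is essentially bookkeeping with the definitions of irreducible component and smash product.
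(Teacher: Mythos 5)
The paper gives no proof of this proposition --- it is quoted verbatim from Montgomery [Corollary 5.6.4] --- so there is nothing internal to compare against; your reconstruction is the standard textbook argument and it is correct. The two structural inputs you flag are exactly the standard facts (the direct-sum decomposition of a pointed cocommutative coalgebra into irreducible components, and the pointed-irreducibility of a tensor product of pointed irreducible coalgebras), and they apply here because an irreducible component containing a group-like is automatically pointed irreducible, so no gap remains.
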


\begin{lem}\label{lemstr}
Let $H$ be a finite-dimensional pointed cocommutative Hopf algebra. Then
\begin{itemize}
\item[(i)] $H_e$ is a connected normal Hopf subalgebra of $H$.
\item[(ii)] $H/H_e^+H\cong k[G]$.
\item[(iii)] $H$ is local if and only if $H_e$ and $k[G]$ are local.
\end{itemize}
\end{lem}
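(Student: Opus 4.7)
The plan is to derive all three statements from the smash product decomposition $H\cong H_e\#kG$ of Proposition \ref{MST}(iii). For (i), Proposition \ref{MST}(ii) already gives that $H_e$ is a Hopf subalgebra. Connectedness follows because an irreducible coalgebra can contain at most one simple subcoalgebra (any two nonzero subcoalgebras intersect), and the one-dimensional coalgebra $ke$ is already simple and lies in $H_e$; hence $(H_e)_0=ke=k$. For normality, the key observation is that for every $g\in G$, the conjugation automorphism $h\mapsto ghg^{-1}$ of $H$ is a coalgebra automorphism fixing $e$, so it sends the irreducible component $H_e$ to itself by maximality. Writing a general $h\in H$ as $\sum y_ig_i$ with $y_i\in H_e$ and $g_i\in G$, and using $\Delta(y_ig_i)=\sum(y_i)_1g_i\otimes(y_i)_2g_i$, both normality identities reduce to the stability of $H_e$ under multiplication by elements of $H_e$ and conjugation by elements of $G$.

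Part (ii) is a direct calculation in the smash product: under the identification $H=H_e\otimes kG$, the ideal $H_e^+H$ is $H_e^+\otimes kG$, so $H/H_e^+H\cong(H_e/H_e^+)\otimes kG\cong kG$ as a Hopf algebra, with the quotient map being $y\otimes g\mapsto\e(y)g$, which is manifestly a Hopf algebra surjection.

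For (iii), the forward direction is immediate from Lemma \ref{ILI}(i) applied to the subalgebras $H_e$ and $kG$ of $H$. Conversely, suppose $H_e$ and $kG$ are local; by Lemma \ref{2duality}(iii), $H_e^+$ and $(kG)^+$ are nilpotent. Part (ii) together with $((kG)^+)^n=0$ yields $(H^+)^n\subseteq H_e^+H$ for some $n$. The normality from (i) together with Remark \ref{1R}(a) gives $H_e^+H=HH_e^+$, from which one readily deduces $(H_e^+H)^m=(H_e^+)^mH$ for every $m\ge 1$ by induction. Taking $m$ with $(H_e^+)^m=0$ yields $(H^+)^{nm}=0$, so $H$ is local by Lemma \ref{2duality}(iii). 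The most delicate step is the normality verification in (i); once that is secured, the remainder is bookkeeping within the smash product, and the nilpotence argument in (iii) is just two successive applications of Lemma \ref{2duality}(iii) bridged by the normality identity $H_e^+H=HH_e^+$.
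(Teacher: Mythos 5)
Your proposal is correct and follows essentially the same route as the paper: both reduce normality of $H_e$ to the conjugation stability $gH_eg^{-1}\subseteq H_e$ (the paper deduces this from $H_gH_eH_{g^{-1}}=H_{geg^{-1}}$, you from conjugation being a coalgebra automorphism fixing $e$), derive (ii) from the smash product decomposition, and prove (iii) by combining $(H^+)^n\subseteq H_e^+H$ with $(H_e^+H)^m=(H_e^+)^mH=0$. Your write-up merely makes explicit a few steps the paper leaves implicit.
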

\begin{proof}
$(\text{i})$ By Proposition \ref{MST}(iii), we know $H$ is generated by $H_e$ and $k[G]$. We only need to check that for every $g\in G$, $gH_eg^{-1}\subseteq H_e$. It is true since $gH_eg^{-1}\subseteq H_{g}H_eH_{g^{-1}}=H_{geg^{-1}}=H_{e}$. We know $H_e$ is connected for it contains only one simple subcoalgebra $ke$. 

$(\text{ii})$ It follows from Proposition \ref{MST} since $H\cong H_e\#kG$.

$(\text{iii})$ Suppose that $H_e$ and $k[G]$ are local. Then by Lemma \ref{2duality}(iii) there exist integers $n,m$ such that $(H_e^+)^{n}=0$ and $((G-1)k[G])^m=0$. By part $(\text{ii})$, we have $(H^+)^m\subseteq H_e^+H$. Hence $(H^+)^{nm}\subseteq (H_e^+H)^n=0$, which implies that $H$ is local again by Lemma \ref{2duality}(iii). The inverse direction holds by Lemma \ref{ILI}(i).
\end{proof}

\section{The Main Theorem}\label{TMT}
Let $H$ be a finite-dimensional Hopf algebra over any arbitrary field $k$.  We still use $\PS^1(H)$ to denote the subalgebra of $H$ generated by $H_1$. We can use the argument in the proof of Proposition \ref{nseries}(ii) to show that $\PS^1(H)$ is also a Hopf subalgebra of $H$.
\begin{thm}\label{TMR}
Let $H$ be a finite-dimensional cocommutative Hopf algebra over any arbitrary field $k$. Then $H$ is local if and only if the subalgebra generated by $H_1$ is local.
\end{thm}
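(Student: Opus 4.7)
The forward direction is immediate: $\PS^1(H)$ is a subalgebra of $H$, so if $H$ is local then $\PS^1(H)$ is local by Lemma \ref{ILI}(i). The real content lies in the converse, and the plan is to reduce to the algebraically closed, connected case where Proposition \ref{cocommutativeconnectedHopfalgebra} can be invoked, using the Cartier--Kostant--Milnor--Moore decomposition of Lemma \ref{lemstr} to peel off the group-like part.

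First I extend scalars to $E=\bar k$. Lemma \ref{ILI}(iii) gives $(H\otimes E)_1\subseteq H_1\otimes E$, which yields the inclusion of subalgebras $\PS^1(H\otimes E)\subseteq \PS^1(H)\otimes E$. Assuming $\PS^1(H)$ is local, Lemma \ref{ILI}(ii) makes $\PS^1(H)\otimes E$ local, so its subalgebra $\PS^1(H\otimes E)$ is local by Lemma \ref{ILI}(i); and once $H\otimes E$ is shown to be local, Lemma \ref{ILI}(ii) transports the conclusion back to $H$. So I may assume $k$ is algebraically closed, under which hypothesis every finite-dimensional cocommutative Hopf algebra is pointed.

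If $\Chara k=0$, the Cartier--Kostant--Milnor--Moore theorem collapses $H$ to a finite group algebra $kG$ (the enveloping algebra factor being trivial in finite dimensions), so $H_0=H$ and $\PS^1(H)=H$, making the converse tautological. Assume now $\Chara k=p>0$. By Lemma \ref{lemstr} I may write $H\cong H_e\# kG$ with $H_e$ a connected normal Hopf subalgebra, and Lemma \ref{lemstr}(iii) reduces the claim to proving that both $H_e$ and $kG$ are local. Since $kG=H_0\subseteq H_1\subseteq \PS^1(H)$, the group algebra is local by Lemma \ref{ILI}(i). For the infinitesimal part, Lemma \ref{2duality}(iv) gives $(H_e)_1=H_e\cap H_1\subseteq H_1$, so $\PS^1(H_e)\subseteq \PS^1(H)$ is local by Lemma \ref{ILI}(i); then the connected case Proposition \ref{cocommutativeconnectedHopfalgebra} upgrades this to locality of $H_e$, finishing the argument.

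The argument is essentially assembly. The substantive analytic step --- bootstrapping from locality of the subalgebra generated by primitives to locality of the entire connected Hopf algebra via the upper power series --- was already carried out in Section \ref{connected}. The one place in the present proof that requires mild care is the base-change reduction, where one must observe that the (possibly strict) containment $\PS^1(H\otimes E)\subseteq \PS^1(H)\otimes E$ is still enough to propagate locality in the required direction; beyond that, the Cartier--Kostant--Milnor--Moore decomposition cleanly decouples the combinatorial piece $kG$ from the infinitesimal piece $H_e$, both of which land inside $\PS^1(H)$ by inspection.
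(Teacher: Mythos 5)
Your proposal is correct and follows essentially the same route as the paper: base change to $\bar k$ via Lemma \ref{ILI}, dispose of characteristic zero via the group-algebra case of Cartier--Kostant--Milnor--Moore, then split $H\cong H_e\# kG$ by Lemma \ref{lemstr} and handle $kG$ and the connected part $H_e$ separately, the latter via Proposition \ref{cocommutativeconnectedHopfalgebra}. The only differences are cosmetic — you spell out the forward direction and the details of why $\PS^1(H\otimes \bar k)\subseteq \PS^1(H)\otimes\bar k$ suffices, which the paper leaves implicit.
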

\begin{proof}
Denote $K=H\otimes \bar{k}$. According to Lemma \ref{ILI}(ii), $H$ is local if and only if $K$ is local. Moreover by Lemma \ref{ILI}(iii), $\PS^1(K)\subseteq \PS^1(H)\otimes \bar{k}$ since $K_1\subseteq H_1\otimes \bar{k}$. Hence without loss of generality, we can assume $k$ to be algebraically closed. Thus $H$ is pointed by \cite[pp. 76]{montgomery1993hopf}. 

If $k$ has characteristic zero, $H$ must be a group algebra by \cite[Theorem 1.1]{andruskiewitsch2000finite}. Hence $H$ is local if and only if it is just the base field $k$. The statement is trivial in this case. On the other hand, assume that $\Chara\ k\not= 0$. Denote $A=H_e$ and $G=\Grp(H)$ as in Proposition \ref{MST}. Since $k[G]\subseteq H_1\subseteq \PS^1(H)$, it is local by Lemma \ref{ILI}(i). So is $\PS^1(A)$ since $A_1\subseteq H_1$ by Lemma \ref{2duality}(iv). By Proposition \ref{cocommutativeconnectedHopfalgebra}, $A$ is local for it is cocommutative and connected. Finally, the proof is complete by Lemma \ref{lemstr}(iii).  
\end{proof}

\begin{cor}
Let $H$ be a finite-dimensional connected Hopf algebra. Then the following are equivalent:
\begin{itemize}
\item[(i)] $H$ is local.
\item[(ii)] $u(\mathfrak g)$ is local, where $\mathfrak g$ is the primitive space of $H$.
\item[(iii)] All the primitive elements of $H$ are nilpotent.
\end{itemize}
\end{cor}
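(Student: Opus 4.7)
The plan is to first identify $\PS^1(H)$ with the restricted enveloping algebra $u(\mathfrak g)$, which reduces $(i)\Leftrightarrow(ii)$ to a direct application of Theorem \ref{TMR}, and then establish $(ii)\Leftrightarrow(iii)$ via Engel's theorem for restricted Lie algebras combined with an induction on $\dim\mathfrak g$.

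I would begin by disposing of the characteristic zero case: by the Cartier--Kostant--Milnor--Moore theorem reviewed in Section \ref{PCHA}, a finite-dimensional connected Hopf algebra in characteristic zero must be the base field $k$, whence $\mathfrak g=0$ and all three conditions hold vacuously. So assume $\Chara k=p>0$. Then $\mathfrak g=\Prim(H)$ is a restricted Lie subalgebra of $H$, and since $H$ is connected we have $H_1=k\oplus\mathfrak g$, so $\PS^1(H)=k\langle H_1\rangle$ is the image of the canonical Hopf algebra map $u(\mathfrak g)\to H$ coming from the universal property of the restricted enveloping algebra. The map is surjective onto $\PS^1(H)$ and is the identity on primitives, and a standard consequence of the coradical filtration (a Hopf algebra map between pointed Hopf algebras injective on the first coradical term is injective) shows it is an isomorphism. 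Combined with Theorem \ref{TMR}, this gives $(i)\Leftrightarrow(ii)$.

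The implication $(ii)\Rightarrow(iii)$ is immediate from Lemma \ref{2duality}(iii), since if $u(\mathfrak g)$ is local then $u(\mathfrak g)^+$ is nilpotent and $\mathfrak g\subseteq u(\mathfrak g)^+$. For $(iii)\Rightarrow(ii)$, if every $x\in\mathfrak g$ is nilpotent in $u(\mathfrak g)$ with $x^N=0$, the identity $(\ad x)^{2N}(y)=\sum_i(-1)^i\binom{2N}{i}x^{2N-i}yx^i$ shows $\ad x$ is a nilpotent endomorphism of $\mathfrak g$. Engel's theorem then furnishes a nonzero central element $z\in\mathfrak g$, and the restricted subalgebra $\mathfrak z$ generated by the iterated $p$-powers of $z$ is a central, abelian, $p$-nilpotent restricted ideal of $\mathfrak g$. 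Thus $u(\mathfrak z)$ is a central local Hopf subalgebra of $u(\mathfrak g)$ with quotient $u(\mathfrak g)/u(\mathfrak z)^+u(\mathfrak g)\cong u(\mathfrak g/\mathfrak z)$, and the hypothesis descends to the quotient. Induction on $\dim\mathfrak g$ together with the fact that a central Hopf extension of a local algebra by a local Hopf subalgebra has nilpotent augmentation ideal (hence is local by Lemma \ref{2duality}(iii)) concludes.

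The main obstacle I expect is the Engel-induction step and the accompanying bookkeeping: producing the central element via Engel is classical, but one must verify that $\mathfrak z$ is genuinely $p$-closed and central, that the quotient Hopf structure is really $u(\mathfrak g/\mathfrak z)$, and that nilpotence of augmentation ideals is preserved under central Hopf extensions so the induction closes. The identification $\PS^1(H)\cong u(\mathfrak g)$, although standard, also relies on a pointed-Hopf-algebra injectivity principle that should be flagged.
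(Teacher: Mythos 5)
Your proposal is correct, and the first two thirds of it match the paper: the char-$0$ reduction, the identification $\PS^1(H)\cong u(\mathfrak g)$ (which the paper simply asserts, while you rightly flag the Heyneman--Radford-type injectivity principle needed to see that $u(\mathfrak g)\to\PS^1(H)$ is an isomorphism), the appeal to Theorem \ref{TMR} for $(\text{i})\Leftrightarrow(\text{ii})$, and the easy direction $(\text{ii})\Rightarrow(\text{iii})$. Where you genuinely diverge is $(\text{iii})\Rightarrow(\text{ii})$. The paper first shows each $\ad x$ is nilpotent via $(\ad x)^{p^n}=\ad(x^{p^n})=0$ (your binomial identity $(\ad x)^{2N}(y)=\sum_i(-1)^i\binom{2N}{i}x^{2N-i}yx^i$ accomplishes the same), invokes Engel to get that $\mathfrak g$ is nilpotent, and then argues directly with representations: every irreducible $u(\mathfrak g)$-module is a restricted $\mathfrak g$-representation on which each $x$ acts nilpotently, hence is one-dimensional and trivial, so the Jacobson radical equals the augmentation ideal and is nilpotent by Artinianness. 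You instead use Engel only to extract a nonzero central element $z$, form the central restricted ideal $\mathfrak z$ spanned by its iterated $p$-powers, and induct on $\dim\mathfrak g$ through the extension $u(\mathfrak z)\to u(\mathfrak g)\to u(\mathfrak g/\mathfrak z)$. Your route works (centrality of $z^{[p]}$ follows from $\ad(z^{[p]})=(\ad z)^p=0$, the quotient is indeed $u(\mathfrak g/\mathfrak z)$, and nilpotence of $u(\mathfrak g)^+$ follows from $(u(\mathfrak z)^+u(\mathfrak g))^n=(u(\mathfrak z)^+)^nu(\mathfrak g)$ by centrality), but it requires the extra bookkeeping you anticipate; the paper's representation-theoretic shortcut avoids the induction entirely at the cost of quoting the module form of Engel's theorem rather than just the existence of a central element.
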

\begin{proof}
In the connected case, the base field $k$ has positive characteristic \cite[Remark 2.7]{LinXT} and $\PS^1(H)$ is the restricted enveloping algebra of the primitive space $\mathfrak g$. Hence the equivalence between $(\text{i})$ and $(\text{ii})$ follows from Theorem \ref{TMR}. 

$(\text{ii})\Rightarrow (\text{iii})$ is clear since $u(\mathfrak g)$ contains all the primitive elements of $H$ and its augmentation ideal is nilpotent. 

$(\text{iii})\Rightarrow (\text{ii})$. In characteristic $p$, $\mathfrak g$ is a restricted Lie algebra. $(\text{iii})$ asserts that all elements $x\in \mathfrak g$ are nilpotent. Then $(\ad x)^{p^n}=\ad(x^{p^n})=0$ for sufficiently larger $n$. By Engel's Theorem \cite[\S 3.2]{humphreys1980introduction}, $\mathfrak g$ is nilpotent. Any representation of $u(\mathfrak g)$ is a restricted representation of $\mathfrak g$. Therefore any of its irreducible representation is one-dimensional with trivial action of the augmentation ideal of $u(\mathfrak g)$. Hence the augmentation ideal of $u(\mathfrak g)$ is nilpotent and $u(\mathfrak g)$ is local. 
\end{proof}

\section{Examples}\label{exm}
In the following, suppose that $k$ has characteristic $p\neq 0$. We give two parametric families of connected, non-cocommutative, non-local, $p^3$-dimensional Hopf algebras, where all the primitive elements are nilpotent. In these cases, the subalgebra generated by $H_1$ is local, yet $H$ is non-local. This ensures the necessity of cocommutativity in our main theorem [Theorem A].  
\begin{example}
Let $\sigma,\lambda,\mu\in k$ so that $\sigma^p=\sigma,\lambda\sigma=(1-\sigma)\mu=0$, and $A$ be the $k$-algebra generated by elements $x,y,z$ subject to the following relations 
\begin{gather*}
[x,y]=0,\ [x,z]=\sigma x,\ [y,z]=(1-\sigma)y,\\
x^p=0,\ y^p=0,\ z^p-z=\lambda x+\mu y.
\end{gather*}
Then $A$ becomes a connected Hopf algebra via
\begin{gather*}
\e(x)=0,\ \Delta(x)=x\otimes 1+1\otimes x,\ S(x)=-x,\\
\e(y)=0,\ \Delta(y)=y\otimes 1+1\otimes y,\ S(y)=-y,\\
\e(z)=0,\ \Delta(z)=z\otimes 1+1\otimes z+x\otimes y,\ S(z)=-z+xy.
\end{gather*}
We denote it as $A(\sigma,\lambda,\mu)$.
\begin{example}
Let $B$ be the $k$-algebra generated by elements $x,y,z$ subject to the following relations 
\begin{gather*}
[x,y]=0,\ [x,z]=x+\sigma y,\ [y,z]=0,\\
x^p=y,\ y^p=0,\ z^p=z,
\end{gather*}
where $\sigma\in k$. Then $B$ becomes a connected Hopf algebra via
\begin{gather*}
\e(x)=0,\ \Delta(x)=x\otimes 1+1\otimes x,\ S(x)=-x,\\
\e(y)=0,\ \Delta(y)=y\otimes 1+1\otimes y,\ S(y)=-y,\\
\e(z)=0,\ \Delta(z)=z\otimes 1+1\otimes z+(x+\sigma y)\otimes y,\ S(z)=-z+(x+\sigma y)y.
\end{gather*}
We denote it as $B(\sigma)$.
\end{example}
\end{example}

\begin{remark}
It is easy to argue that $A$ and $B$ are non-local by contradiction. Suppose that $B$ is local. By Lemma \ref{2duality}(iii), there exists an integer $n$ so that $(B^+)^n=0$. But $z^{p^n}=z\neq 0$ for any $n\ge 0$ and $z\in B^+$, a contradiction. The argument for $A$ being non-local is similar.
\end{remark}

\end{document}